\DeclareFontFamily{OML}{cyr}{} \DeclareFontShape{OML}{cyr}{m}{n}{
  <5> <6> <7> <8> <9> gen * wncyr
  <10> <10.95> <12> <14.4> <17.28> <20.74> <24.88> wncyr10
  }{}
\DeclareSymbolFont{rusletters}{OML}{cyr}{m}{n}
\DeclareSymbolFontAlphabet{\rusmath}{rusletters}
\DeclareMathSymbol\re{\rusmath}{rusletters}{"03}
\newtheorem{theorem}{Theorem}[section]
\newtheorem{proposition}[theorem]{Proposition}
\newtheorem{lemma}[theorem]{Lemma}
\newtheorem{corollary}[theorem]{Corollary}
\newcommand*{\im}{\mathop{\rm Im}\nolimits}
\newcommand*{\id}{\mathop{\rm id}\nolimits} 
\newcommand*{\Orb}{\mathop{\rm Orb}\nolimits}
\newcommand*{\Vect}{\mathop{\rm Vect}\nolimits}
\newcommand{\0}{\mbox{{\bf 0}}}
\newcommand*{\E}{{\EuScript E}} 
\newcommand*{\A}{{\EuScript A}} 
\newcommand*{\Hr}{{\EuScript H}} 
\newcommand*{\R}{\mathop{\mathbb R}\nolimits} 
\newcommand*{\g}{\mathop{\mathfrak g}\nolimits} 
\newcommand*{\eq}{y''=u^0(x,y)+u^1(x,y)y'+u^2(x,y)(y')^2
  +u^3(x,y)(y')^3}
\title[On the obstruction to linearizability]{On the obstruction to
  linearizability of 2-order ordinary differential equations}
\author[V.A.Yumaguzhin]{Valeriy A. Yumaguzhin}
\date{29 November 2002}
\address{Program Systems Institute, m. Botik, Pereslavl'-Zalesskiy,
  152020, Russia}
\email{yuma@diffiety.botik.ru}
\keywords{2-nd order ordinary differential equation, point
  transformation, equivalence problem, differential invariant,
  Spenser cohomology}
\subjclass{53A55, 53C10, 53C15, 34A30, 34A26, 34C20, 58F35}
\begin{document}
\begin{abstract} In this paper, we investigate the action of
  pseudogroup of all point transformations on the natural bundle of
  equations
  $$
    \eq\,.
  $$
  We calculate the 1-st nontrivial differential invariant of this
  action. It is a horizontal differential 2-form with values in
  some algebra, it is defined on the bundle of 2--jets of sections
  of the considered bundle. We prove that this form is a unique
  obstruction to linearizability of these equations by point
  transformations.
\end{abstract}
\maketitle
%
\section{Introduction}
It is well known that any point transformation takes a 2-order
linear ordinary differential equation to an equation of the form
\begin{equation}\label{eq}
  \eq\,.
\end{equation}
By $\pi$ we denote the natural bundle of equations \eqref{eq}.

It is well known that any point transformation takes an arbitrary
equation \eqref{eq} to the equation of the same form. This means
that the pseudogroup $\Gamma$ of all point transformations acts on
$\pi$. This action can be lifted in the natural way to the action
on the bundle $J^k\pi$ of $k$--jets of sections of
$\pi\,,\;k=1,2\ldots$

In this paper, we investigate these actions. Earlier in
\cite{GYum}, we obtained the following:

1. $J^k\pi$ is an orbit of the action of $\Gamma$ iff $k=0,1$,

2. $J^2\pi$ is divided into two orbits of the action
$J^2\pi=\Orb_1\cup\Orb_2$ with $\dim\Orb_1=\dim J^k\pi$ and
$\dim\Orb_2=\dim J^k\pi - 2$,

3. Equation \eqref{eq} can be reduced to the linear form by a
point transformation iff the collection of its coefficients is a
solution of the equations defining the submanifold $\Orb_2$ in
$J^2\pi$.

\noindent This means that the first nontrivial differential
invariant of the actions of $\Gamma$ "lives" on $J^2\pi$ and it is
a unique obstruction to the linearizability of equations
\eqref{eq} by point transformations.

The aim of this paper is to construct this obstruction. We
constructed it in subsection \ref{SubSctObstrFrm} of this paper.
It is a horizontal differential 2-form on $J^2\pi$ with values in
some algebra. This form is nontrivial at any point of $\Orb_1$ and
it is zero at any point of $\Orb_2$.

Recall that in \cite{Crtn}, Cartan proved that equation \eqref{eq}
is equivalent to some projective connection and the equation can
be reduced to the linear form by a point transformation iff the
curvature form of this connection is equal to zero. We do not use
projective connections to construct the obstruction form. Our
construction recall the well known construction of structure
functions of prolongations of $G$--structures (see
\cite{Strnbrg}).\\

Below, all manifolds and maps are supposed to be smooth. By
$[f]_p^k$ denote the $k$--jet of the map $f$ at the point $p$, by
$\R$ denote the field of real numbers, and by $\R^n$ denote the
$n$--dimensional arithmetic space.

\section{The natural bundle of equations}
%
\subsection{Liftings of point transformations}
\subsubsection{The lifting to the bundle of equations}
Let $$
  \pi :E=\R^2\times\R^4\to\R^2
$$ be a product bundle. By $x^1,x^2$ denote the standard
coordinate on the base of $\pi$, by $u^0, u^1, u^2, u^3$ denote
the standard coordinates on the fiber of $\pi$.

Let $\E$ be an arbitrary equation \eqref{eq}. We identify $\E$
with the section $S_{\E}$ of $\pi$ defined by the formula $$
  S_{\E}:(x^1,\,x^2)\mapsto
  \bigl(\,x^1,\,x^2,\,u^0(x^1,x^2),\,u^1(x^1,x^2),\,u^2(x^1,x^2),\,
  u^3(x^1,x^2)\,\bigr).
$$ Clearly, this identification is a bijection between the set of
all equations \eqref{eq} and the set of all sections of $\pi$.

It is well known (see \cite{Arnd}) that an arbitrary point
transformation
\begin{equation}\label{PntTr}
  f:(x^1,x^2)\mapsto\bigl(\,\tilde x^1=f^1(x^1,x^2),\;
  \tilde x^2=f^2(x^1,x^2)\,\bigr)\,.
\end{equation}
transforms an equation of form \eqref{eq} to the equation of the
same form. The coefficients of the obtained equation are expressed
in terms of the coefficients of the initial one and the
derivatives of order $\leq 2$ of the inverse transformation to
$f$:
\begin{gather}
  \tilde u^{\alpha}=\Phi^{\alpha}\Bigl(\,u^{\beta},\,
  \frac{\partial g^i}{\partial\tilde x^j},\,
  \frac{\partial^{\,2}g^i}{\partial\tilde x^{j_1}\partial\tilde x^{j_2}}\,\Bigr)
  \,,\label{CffTr}\\
  \alpha,\beta=0,1,2,3\,,\;g=\bigl(g^1,\,g^2\bigr)
  =f^{-1}\,,\;i,j,j_1,j_2=1,2\,.\notag
\end{gather}

Equations \eqref{PntTr} and \eqref{CffTr} defines the
diffeomorphism $f^{(0)}$ of the bundle $\pi$ which is called {\it
the lifting of $f$ to the bundle $\pi$}.

Obviously, the following diagram $$
  \begin{CD}
    E        @>f^{(0)}>> E\\
    @V\pi VV             @VV\pi V\\
    \R       @>>f>       \R
  \end{CD}
$$ is commutative (in the domain of $f^{(0)}$).

For any point transformation $f$, we define the transformation of
sections of $\pi$ by the formula
\begin{equation}\label{TrnsftnSctn}
  S\mapsto f(\,S\,)=f^{(0)}\circ S\circ f^{-1}\,.
\end{equation}

Equations \eqref{CffTr} can be represented now as $$
    S_{\tilde\E}=f(\,S_{\E}\,)\,.
$$ Now the following statement is obvious.
\begin{proposition}\label{RdcJrms}
  Let $\E\,,\;\tilde\E$ be equations of form \eqref{eq}. Then a
  point transformation $f$ takes $\E$ to $\tilde\E$ iff
  $S_{\tilde\E}=f(\,S_{\E}\,)$.
\end{proposition}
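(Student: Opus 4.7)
The proof is essentially bookkeeping: both sides of the claimed equality reduce to the transformation law \eqref{CffTr} after the definitions are unfolded, so the plan is simply to track the definitions carefully.

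First I would recall what ``the point transformation $f$ takes $\E$ to $\tilde\E$'' means at the level of coefficients. By the derivation recalled above \eqref{CffTr} (and standard in \cite{Arnd}), substituting the inverse change of variables $g=f^{-1}$ into equation \eqref{eq} with coefficients $u^\alpha$ and rewriting the result in the new independent variables yields an equation of the same form whose coefficients $\tilde u^\alpha$, evaluated at $\tilde x=f(x)$, are given by the functions $\Phi^\alpha$ applied to the $u^\beta(x)$ and to the first and second partial derivatives of $g$ at $\tilde x$. Thus $f$ takes $\E$ to $\tilde\E$ iff for every point $\tilde x$ in the image, the coefficients $\tilde u^\alpha(\tilde x)$ of $\tilde\E$ satisfy exactly the relations \eqref{CffTr}.

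Next I would evaluate $f(S_\E)=f^{(0)}\circ S_\E\circ f^{-1}$ at an arbitrary point $\tilde x$ of the base. Applying $f^{-1}$ gives $x=g(\tilde x)$; then $S_\E(x)=\bigl(x,u^0(x),\ldots,u^3(x)\bigr)$; finally, $f^{(0)}$ sends the base component to $\tilde x$ and, by its very definition through \eqref{PntTr} and \eqref{CffTr}, transforms the fiber coordinates by the same functions $\Phi^\alpha$ with the same arguments $u^\beta(x)$ and $\partial g^i/\partial\tilde x^j$, $\partial^2 g^i/\partial\tilde x^{j_1}\partial\tilde x^{j_2}$ at $\tilde x$. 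So the fiber value of $f(S_\E)$ over $\tilde x$ is precisely the tuple of right-hand sides of \eqref{CffTr}.

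Comparing with $S_{\tilde\E}(\tilde x)=\bigl(\tilde x,\tilde u^0(\tilde x),\ldots,\tilde u^3(\tilde x)\bigr)$, the equality $S_{\tilde\E}=f(S_\E)$ at every point of the base is therefore literally the system \eqref{CffTr}, which by the first paragraph is exactly the condition that $f$ carries $\E$ to $\tilde\E$. Both implications follow simultaneously. There is no real obstacle here; the only delicate point is being consistent about where things are evaluated (the derivatives of $g$ must be read at $\tilde x$, not at $x$), which is why the conjugation form $f^{(0)}\circ S_\E\circ f^{-1}$ is the correct way to push a section forward along $f$.
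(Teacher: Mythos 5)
Your proposal is correct and follows the same route as the paper: the author simply observes that the transformation law \eqref{CffTr} can be rewritten as $S_{\tilde\E}=f(\,S_{\E}\,)$ and declares the proposition obvious, while you spell out the same unfolding of the definitions of $f^{(0)}$ and of \eqref{TrnsftnSctn}. The only difference is the level of detail; there is no substantive divergence in method.
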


\subsubsection{The lifting to jet bundles}
By $[S]_p^k$ denote the $k$--jet of a section $S$ of $\pi$ at the
point $p\,,\;k=0,1,2,\ldots,\infty$. By $$
  \pi_k:J^k\pi\to\R^2\,,\;\pi_k:[S]_p^k\mapsto p\,,
$$ denote the bundle of all $k$--jets of sections of $\pi$. The
projection\linebreak $\pi_{k,r}:J^k\pi\to J^r\pi\,,\;k>r$, is
defined by $\pi_{k,r}(\,[S]_p^k\,)=[S]_p^r$. By  definition, put
$J_p^k\pi=\pi_k^{-1}(p)$.

Every section $S$ of $\pi$ generates the section $j_kS$ of the
bundle $\pi_k$ by the formula $j_kS: p\;\mapsto\;[S]_p^k$.

By $x^1,x^2,u^i_{\sigma}\,,\;i=0,\ldots,3\,,\;0\leq|\sigma|\leq
k$, denote the standard coordinates in $J^k\pi$, here $\sigma$ is
the multi-index $\{j_1\ldots j_r\}\,,\;|\sigma|=r\,,\;j_1,
\ldots,j_r=1,2$. By definition, put $\sigma j=\{j_1\ldots j_rj\}$

Any point transformation $f$ can be lifted to the diffeomorphism
$f^{(k)}$ of $J^k\pi$ by the formula
\begin{equation}\label{LftTr}
  f^{(k)}(\,[S]^k_p\,)=\bigl[\,f^{(0)}\circ S\circ
  f^{-1}\,\bigr]^k_{f(p)}\,.
\end{equation}
The diffeomorphism $f^{(k)}$ is called {\it the lifting of $f$ to
the jet bundle $J^k\pi$}.

Obviously, for any $l>m$, the diagram $$
  \begin{CD}
    J^l\pi         @>f^{(l)}>> J^l\pi\\
    @V\pi_{l,m}VV             @VV\pi_{l,m} V\\
    J^m\pi         @>>f^{(m)}> J^m\pi
  \end{CD}
$$ is commutative (in the domains of $f^{(l)}$).

By $\Gamma$ we denote the pseudogroup of all point transformation
of the base of $\pi$, by $\Gamma^{(k)}$ we denote the
transformation pseudogroup in $J^k\pi$ generated by all
diffeomorphisms $f^{(k)}\,,\;f\in\Gamma$.

\subsection{Liftings of vector fields}
Let $X$ be a vector field in the base of $\pi$ and let $f_t$ be
its flow. Then the flow $f_t^{(k)}$ in $J^k\pi$ defines the vector
field $X^{(k)}$ in $J^k\pi$ which is called {\it the lifting of
$X$ to $J^k\pi$}. Obviously
\begin{equation}\label{kPrlng}
  (\,\pi_{l,m}\,)_*\bigl(\,X^{(l)}\,\bigr)=X^{(m)}\,,\;\;
  \infty\geq l>m\geq -1\,,
\end{equation}
where $X^{(-1)}=X$.

Let $$
  X=X^1(x^1,x^2)\frac{\partial}{\partial x^1}
  +X^2(x^1,x^2)\frac{\partial}{\partial x^2}\,,
$$ then we have the following formula (see \cite{KV})
\begin{equation}\label{InftPrlng}
  X^{(\infty)}=X^1D_1+X^2D_2+\re_{\psi(X)}\,,
\end{equation}
where $$
  D_j=\frac{\partial}{\partial x^j}
      +\sum_{|\sigma|\geq 0}\sum_{i=0}^{3}u^i_{\sigma j}
      \frac{\partial}{\partial u^i_{\sigma}}\,,
$$ is the operator of total derivation w.r.t. $x^j$,
\begin{equation}\label{Evol}
 \re_{\psi(X)}=\sum_{|\sigma|\geq 0}\sum_{i=0}^{3}
 D_{\sigma}\bigl(\,\psi^i(X)\,\bigr)
 \frac{\partial}{\partial u^i_{\sigma}}
\end{equation}
is the operator of evolution differentiation corresponding to the
generating function $\psi(X)=(\psi^0(X),\ldots,\psi^3(X))^t$,
$\sigma =\{j_1\ldots j_r\}\,,\;D_{\sigma} =D_{j_1}\circ\ldots\circ
D_{j_r}$. The function $\psi(X)$ is defined in the following way.
Let $S$ be a section of $\pi$ defined in the domain of $X$, let
$\theta_1=[S]^1_p$, and let $p=\pi_1(\theta_1)$; then
\begin{equation}\label{DfrmtnVlst0}
  \psi(X)(\theta_1)
  =\begin{pmatrix}
     \psi^0(X)(\theta_1)\\
     \cdots\\
     \psi^3(X)(\theta_1)\,.
  \end{pmatrix}
  =\frac{d}{dt}(\,f_t^{(0)}\circ S\circ
  f_t^{-1}\,)\Bigr|_{t=0}(p)
\end{equation}
Obviously, $\psi(X)(\theta_1)$ is the deformation velocity of the
section $S$ at the point $p$ under the action of the flow $f_t$.

Let $\theta_1=(\,x^1, x^2, u^i, u^i_j\,),\,i=0,1,2,3,\,j=1,2$;
then it can be calculated that
\begin{equation}\label{DfrmtnVlst1}
  \psi(X)(\theta_1)=
  \begin{pmatrix}
    -u^0_1X^1-u^0_2X^2\\
    - 2u^0X^1_1 + u^0X^2_2 - u^1X^2_1
    + X^2_{11}\vspace{.1in}\\
    -u^1_1X^1-u^1_2X^2\\
    - 3u^0X^1_2 - u^1X^1_1 - 2u^2X^2_1
    - X^1_{11} + 2X^2_{12}\vspace{.1in}\\
    -u^2_1X^1-u^2_2X^2\\
    - 2u^1X^1_2 - u^2X^2_2 - 3u^3X^2_1
    - 2X^1_{12} + X^2_{22}\vspace{.1in}\\
    -u^3_1X^1-u^3_2X^2\\
    -  u^2X^1_2 + u^3X^1_1 - 2u^3X^2_2
    - X^1_{22}
  \end{pmatrix}\,,
\end{equation}
where $X^i_j=\displaystyle\frac{\partial X^i}{\partial x^j}(p)$
and $X^i_{j_1j_2}=\displaystyle\frac{\partial^2X^i}{\partial
x^{j_1}\partial x^{j_2}}(p)$\,.
\medskip

Let $\Vect\R^2$ and $\Vect J^k\pi$ be the Lie algebras of all
vector fields in $\R^2$ and $J^k\pi$ respectively.
\begin{proposition} The map
  $$
    \Vect\R^2\to\Vect J^k\pi\,,\quad X\mapsto X^{(k)}\,,
  $$
  is a Lie algebra homomorphism.
\end{proposition}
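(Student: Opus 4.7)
The plan is to reduce the statement to a simple functorial property of the lifting construction for diffeomorphisms. The key observation is that $f\mapsto f^{(k)}$ is a pseudogroup homomorphism, namely $(f\circ g)^{(k)}=f^{(k)}\circ g^{(k)}$ and $(\id)^{(k)}=\id$. Expanding both sides of the composition identity at $[S]^k_p$ via formula \eqref{LftTr}, each yields $\bigl[\,f^{(0)}\circ g^{(0)}\circ S\circ g^{-1}\circ f^{-1}\,\bigr]^k_{f(g(p))}$, once one has first checked the analogous identity $(f\circ g)^{(0)}=f^{(0)}\circ g^{(0)}$ at the base level. The latter follows from Proposition \ref{RdcJrms}: applying $g$ and then $f$ to an equation $\E$, the defining property of the lifting at level $0$ says $S_{g(\E)}=g^{(0)}\circ S_\E\circ g^{-1}$ and then $S_{f(g(\E))}=f^{(0)}\circ S_{g(\E)}\circ f^{-1}$, and comparing with $S_{(f\circ g)(\E)}=(f\circ g)^{(0)}\circ S_\E\circ(f\circ g)^{-1}$ gives the claim.

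Two corollaries are then immediate. First, if $f_t$ is the local flow of $X\in\Vect\R^2$, then $\{f_t^{(k)}\}$ is a one-parameter local pseudogroup in $J^k\pi$, whose infinitesimal generator is, by definition, $X^{(k)}$. Second, for any point transformation $h$ and any vector field $X$, the naturality identity
\begin{equation*}
  (h_*X)^{(k)}=h^{(k)}_*\,X^{(k)}
\end{equation*}
holds, because the flow of $h_*X$ is $h\circ f_t\circ h^{-1}$ and its lift, via the functoriality above, is $h^{(k)}\circ f_t^{(k)}\circ(h^{(k)})^{-1}$, which is precisely the flow of $h^{(k)}_*X^{(k)}$.

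To finish, invoke the classical flow-theoretic description of the Lie bracket,
\begin{equation*}
  [X,Y]=\frac{d}{dt}\Bigr|_{t=0}(f_{-t})_*Y,
\end{equation*}
valid both on $\R^2$ and on $J^k\pi$. Formula \eqref{InftPrlng} together with \eqref{Evol} and \eqref{DfrmtnVlst0} shows that $X\mapsto X^{(k)}$ depends $\R$-linearly and smoothly on the coefficients of $X$ and their derivatives up to the relevant order; therefore the operator $\frac{d}{dt}\bigr|_{t=0}$ commutes with $(\,\cdot\,)^{(k)}$. Combining this with the naturality identity above yields
\begin{equation*}
  [X,Y]^{(k)}=\Bigl(\tfrac{d}{dt}\bigr|_{t=0}(f_{-t})_*Y\Bigr)^{(k)}=\tfrac{d}{dt}\bigr|_{t=0}\bigl(f_{-t}^{(k)}\bigr)_*Y^{(k)}=[X^{(k)},Y^{(k)}].
\end{equation*}

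The only step demanding genuine verification is the base-level functoriality $(f\circ g)^{(0)}=f^{(0)}\circ g^{(0)}$; once that is in place the passage to $J^k\pi$ via \eqref{LftTr} is automatic and the Lie-algebraic conclusion is formal. The mild subtlety is the pseudogroup (as opposed to group) setting: domains of composed transformations shrink, but since all flow identities above are read off pointwise in $(t,p)$ on arbitrarily small neighborhoods, no further care is required.
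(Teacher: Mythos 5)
Your proof is correct and follows essentially the same route as the paper's: both rest on the pseudogroup homomorphism property $(f\circ g)^{(k)}=f^{(k)}\circ g^{(k)}$ of the lifting together with the flow-theoretic description of the Lie bracket, the paper carrying this out as a single limit computation while you package it more cleanly through the naturality identity $(h_*X)^{(k)}=h^{(k)}_*X^{(k)}$ and the commutation of $\frac{d}{dt}\bigl|_{t=0}$ with the lift. The interchange of the $t$-derivative with $(\,\cdot\,)^{(k)}$, which you justify by the linear dependence of $X^{(k)}$ on finitely many derivatives of the coefficients of $X$, is used implicitly in the paper's limit manipulation as well, so no new gap is introduced.
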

\begin{proof}
  The map $\Gamma\to\Gamma^{(k)}\,,\;f\mapsto f^{(k)}$, is a
  homomorphism of Lie pseu\-do\-gro\-ups. It has as a consequence the
  statement of the proposition. Indeed, let $X\,,Y$ be vector
  fields on $\R^2$ and let $f_t\,,g_s$ be their flows
  respectively. Then
  \begin{multline*}
    [\,X^{(k)},\,Y^{(k)}\,]=\lim_{t\to 0}\frac{1}{t}
    \Bigl(\,Y^{(k)}-(f^{(k)}_t)_*(Y^{(k)}\circ
    f^{(k)}_{-t})\,\Bigr)\\
    =\lim_{t\to 0}\frac{1}{t}
    \Bigl(\,\frac{d}{ds}\Bigl|_{s=0}g^{(k)}_s
    -(f^{(k)}_t)_*\bigl(\frac{d}{ds}\Bigl|_{s=0}g^{(k)}_s
    \circ f^{(k)}_{-t}\bigr)\,\Bigr)
    =\lim_{t\to 0}\frac{1}{t}
    \Bigl(\,\frac{d}{ds}\Bigl|_{s=0}g^{(k)}_s\\
    -\frac{d}{ds}\Bigl|_{s=0}f^{(k)}_t\circ\,g^{(k)}_s
    \circ f^{(k)}_{-t}\,\Bigr)
    =\lim_{t\to 0}\frac{1}{t}\frac{d}{ds}\Bigl|_{s=0}
    \Bigl(\,g^{(k)}_s\circ f^{(k)}_t\circ\,g^{(k)}_s
    \circ f^{(k)}_{-t}\,\Bigr)\\
    =\lim_{t\to 0}\frac{1}{t}\frac{d}{ds}\Bigl|_{s=0}
    \Bigl(\,g_s\circ f_t\circ\,g_s
    \circ f_{-t}\,\Bigr)^{(k)}
    =\lim_{t\to 0}\frac{1}{t}
    \Bigl(\,\frac{d}{ds}\Bigl|_{s=0}g_s\\
    -\frac{d}{ds}\Bigl|_{s=0}f_t\circ\,g_s
    \circ f_{-t}\,\Bigr)^{(k)}
    =\lim_{t\to 0}\frac{1}{t}
    \bigl(\,Y-(f_t)_*(Y\circ f_{-t})\,\bigr)^{(k)}
    =[\,X,\,Y\,]^{(k)}\,.
  \end{multline*}
  The $\R$ -- linearity of the map $ X\mapsto X^{(k)}$ is obvious.
\end{proof}

\section{Isotropy algebras and spaces}
\subsection{Preliminaries}
In this subsection, we recall some necessary notions concerning
formal vector fields, prolongations of subspaces, Spenser
cohomologies and the decomposition of tangent spaces to $J^k\pi$
(see \cite{BrnshtnRznfld}, \cite{GllmnStrnbrg}, and \cite{KV}).

\subsubsection{Formal vector fields}
By $W_p$ we denote the Lie algebra of $\infty$--jets at $p\in\R^2$
of all vector fields defined in a neighborhoods of $p$. Recall
that the structure of Lie algebra on $W_p$ is defined by the
operations
\begin{gather*}
  \lambda [X]^{\infty}_p\stackrel{df}{=}[\lambda X]^{\infty}_p\,,\quad
  [X]^{\infty}_p+[Y]^{\infty}_p\stackrel{df}{=}[X+Y]^{\infty}_p\,,\\
  \bigl[\,[X]^{\infty}_p,[Y]^{\infty}_p\,\bigr]\stackrel{df}{=}
  \bigl[\,[X,Y]\,\bigr]^{\infty}_p\\
  \forall\;\lambda\in\R\,,\quad
  \forall\;\;[X]^{\infty}_p\,,[Y]^{\infty}_p\in W_p\,.
\end{gather*}

By $L_p^k\,,\;k=-1,0,1,2,\ldots$, we denote the subalgebra in
$W_p$ defined by $$
  L_p^k=\bigl\{\,[X]^{\infty}_p\in W_n\,\bigl|
  \,[X]^k_p=0\,\bigr\}\,,\;k\geq 0\,,
  \quad L_p^{-1}=W_p\,.
$$ By definition, put $$
  V_p=W_p/L_p^0\,.
$$ Obviously, $V_p\cong T_p\R^2$. We have the filtration $$
  W_p=L_p^{-1}\supset L_p^0\supset L_p^1\supset\ldots\supset
  L_p^k\supset L_p^{k+1}\supset\ldots\,.
$$

For any $i>j\geq 0$, we denote by $\rho_{i,j}$ the natural
projection $$
  \rho_{i,j}:W_p/L_p^i\to W_p/L_p^j\,,\quad
  \rho_{i,j}:[X]_p^i\mapsto [X]_p^j
$$ and by definition, put $$
  \rho_i = \rho_{i,0}\,.
$$

Taking into account that $$
  [\,L_p^i\,,\;L_p^j\,]\
  ,=\,L_p^{i+j}\,,\quad
  i,j=-1,0,1,2,\ldots\,,
$$ we see that the bracket operation $[\,\cdot\,,\cdot\,]$ on
$W_p$ generates the following maps
\begin{align}
  [\,\cdot\,,\,\cdot\,]:\,&W_p/L_p^k\times W_p/L_p^k\to W_p/L_p^{k-1}\,,\label{brkt1}\\
  [\,\cdot\,,\,\cdot\,]:\,&V_p\times L_p^k/L_p^{k+1}\to L_p^{k-1}/L_p^k\,.\label{brkt2}
\end{align}
The last map generates the isomorphism $$
  L_p^k/L_p^{k+1}\cong V_p\otimes S^k(V_p^*)\,.
$$

Let $g_k$ be a subspace of $L_p^{k-1}/L_p^k$. The subspace
$g_k^{(1)}\subset L_p^k/L_p^{k+1}$ defined by $$
  g_k^{(1)}=\bigl\{\,X\in L_p^k/L_p^{k+1}\,\bigl|
  \,[\,v\,,\,X\,]\in g_k\;\;\forall\;v\in V_p\,\bigr\}
$$ is called {\it the 1-st prolongation of $g_k$}.

Suppose the sequence of subspaces $$
  g_1\,,\;g_2\,,\;\ldots\,,\;g_i\,,\;\ldots
$$ satisfies to the property $$
  [\,V\,,\;g_{i+1}\,]\subset g_i\,.
$$ Then for every $g_i$, we have the complex
\begin{equation}\label{SpnsrCmplx}
  0\to g_i\xrightarrow{\partial_{i,0}}g_{i-1}\otimes V_p^*
  \xrightarrow{\partial_{i-1,1}}g_{i-2}\otimes\wedge^2 V_p^*
  \xrightarrow{\partial_{i-2,2}} 0\,,
\end{equation}
where the operators $\partial_{k,l} : g_k\otimes\wedge^l V_p^*\to
g_{k-1}\otimes\wedge^{l+1} V_p^*$ are defined in the following
way: any element $\xi\in g_k\otimes\wedge^l V_p^*$ can be
considered as an exterior form on $V_p$ with values in $g_k$, then
$$
  (\,\partial_{k,l}(\xi)\,)(v_1,\ldots,v_{l+1})
  =\sum_{i=1}^{l+1}(-1)^{i+1}[\,v_i\,,\;\xi(v_1,\ldots,\hat
  v_i,\ldots,v_{l+1})\,]\,.
$$ We denote by $H_p^{k,l}$ the cohomology group of this complex
in the term $g_k\otimes\wedge^l V_p^*$. It is called a {\it
Spenser cohomology group}.

\subsubsection{The decomposition of tangent spaces}
Let $\theta_{k+1}\in J^{k+1}\pi$, let
$\theta_k=\pi_{k+1,k}(\theta_{k+1})$, and let
$[S]^{k+1}_p=\theta_{k+1}$. Then the tangent space to the image of
the section $j_kS$ at the point $\theta_k$ is defined by
$\theta_{k+1}$. We denote this tangent space by
$\Hr_{\theta_{k+1}}$. We have the following direct sum
decomposition of the tangent space to $J^k\pi$ at the point
$\theta_k$ $$
  T_{\theta_k}J^k\pi=\Hr_{\theta_{k+1}}\oplus
  T_{\theta_k}\bigl(\pi^{-1}(p)\bigr)\,.
$$

Let $X$ be a vector field in the base of $\pi$ defined in a
neighborhood of $p$. Then the value $X^{(k)}_{\theta_k}$ of
$X^{(k)}$ at the point $\theta_k$ has a unique decomposition
\begin{equation}\label{VctFldLftk}
  X^{(k)}_{\theta_k}=\Hr_{\theta_{k+1}}X^{(k)}+
  V_{\theta_{k+1}}X^{(k)}\,,
\end{equation}
where $\Hr_{\theta_{k+1}}X^{(k)}\in\Hr_{\theta_{k+1}}$ and
$V_{\theta_{k+1}}X^{(k)}\in T_{\theta_k}\bigl(\pi^{-1}(p)\bigr)$.
It follows from \eqref{InftPrlng} and \eqref{kPrlng} that if
$X=X^1\partial/\partial x^1+X^2\partial/\partial x^2$, then
\begin{equation}\label{Hrs-Evol}
  \Hr_{\theta_{k+1}}X^{(k)}=X^1D_1^{\theta_{k+1}}
  +X^2D_2^{\theta_{k+1}}\,,\quad
  V_{\theta_{k+1}}X^{(k)}=\re_{\psi(X)}^{\theta_{k+1}}\,,
\end{equation}
where
\begin{gather}
  D_j^{\theta_{k+1}}=\frac{\partial}{\partial x^j}
      +\sum_{0\leq|\sigma|\leq k}\sum_{i=0}^{3}u^i_{\sigma j}(\theta_{k+1})
      \frac{\partial}{\partial u^i_{\sigma}}\,,\notag\\
 \re^{\theta_{k+1}}_{\psi(X)}=\sum_{0\leq|\sigma|\leq k}\sum_{i=0}^{3}
 \Bigl(\,D_{\sigma}\bigl(\,\psi^i(X)\,\bigr)\,\Bigr)(\theta_{k+1})
 \frac{\partial}{\partial u^i_{\sigma}}\,.\label{Evol1}
\end{gather}

It follows from \eqref{DfrmtnVlst1} that the value
$X^{(k)}_{\theta_k}$ of the vector field $X^{(k)}$ at the point
$\theta_k$ is depended on the jet $[X]^{k+2}_p$.

\subsection{Isotropy algebras}
Let $\theta_k\in J^k\pi$ and $p=\pi(\theta_k)$. By $G_{\theta_k}$
we denote the {\it isotropy group} of $\theta_k$, that is $$
  G_{\theta_k}=\bigl\{\;[f]^{2+k}_p\;\bigl|\;f\in\Gamma\,,\;
  f^{(k)}(\theta_k)=\theta_k\;\bigr\}
$$

By $\g_{\theta_k}$ we denote the Lie algebra of $G_{\theta_k}$. It
can be considered as a Lie subalgebra in $L^0_p/L^{2+k}_p$: $$
  \g_{\theta_k}=\bigl\{\;[X]^{2+k}_p\in L^0_p/L_p^{2+k}\;\bigl|\;
  X\in\Vect\R^2\,,\; X^{(k)}_{\theta_k}=0\;\bigr\}
$$ The subalgebra $\g_{\theta_k}\subset L^0_p/L_p^{2+k}$ is called
the {\it isotropy algebra} of $\theta_k$.

From this definition and \eqref{VctFldLftk}, \eqref{Hrs-Evol}, and
\eqref{Evol1}, we get
\begin{proposition}
  $[X]^{2+k}_p\in\g_{\theta_k}$ iff it is a solution of the system
  of linear algebraic equations
  \begin{equation}\label{IstrpAlgk}
    \bigl(\,D_{\sigma}(\,\psi^i_{X}\,)\,\bigr)(\theta_k)=0\,,\quad
    0\leq|\sigma|\leq k\,.
  \end{equation}
\end{proposition}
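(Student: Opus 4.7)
The plan is to reduce the isotropy condition $X^{(k)}_{\theta_k}=0$ to coordinate form by invoking the tangent-space decomposition $T_{\theta_k}J^k\pi=\Hr_{\theta_{k+1}}\oplus T_{\theta_k}(\pi^{-1}(p))$ developed in the previous subsection and reading off coefficients against the natural basis $\{\partial/\partial u^i_\sigma\}$.

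First I would unwind the definition: the class $[X]^{2+k}_p$ belongs to $\g_{\theta_k}\subset L^0_p/L^{2+k}_p$ exactly when some (hence any) representative $X\in\Vect\R^2$ satisfies both $X(p)=0$ and $X^{(k)}_{\theta_k}=0$. The condition $X\in L^0_p$ is the key simplification, because $X^1(p)=X^2(p)=0$ immediately kills the horizontal component in \eqref{Hrs-Evol}:
$$\Hr_{\theta_{k+1}}X^{(k)}=X^1 D_1^{\theta_{k+1}}+X^2 D_2^{\theta_{k+1}}=0$$
for every lift $\theta_{k+1}\in\pi_{k+1,k}^{-1}(\theta_k)$.

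Next, by \eqref{VctFldLftk} the vector $X^{(k)}_{\theta_k}$ reduces to its vertical part, and \eqref{Evol1} supplies the explicit expansion
$$X^{(k)}_{\theta_k}=\sum_{0\leq|\sigma|\leq k}\sum_{i=0}^{3}\bigl(D_\sigma(\psi^i(X))\bigr)(\theta_{k+1})\,\frac{\partial}{\partial u^i_\sigma}.$$
Since the coordinate vectors $\partial/\partial u^i_\sigma$ with $0\leq|\sigma|\leq k$ and $0\leq i\leq 3$ form a basis of $T_{\theta_k}(\pi^{-1}(p))$, the vanishing of $X^{(k)}_{\theta_k}$ will be equivalent to the vanishing of every coefficient, which is precisely the system \eqref{IstrpAlgk}. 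Note also that, since the left-hand side depends only on $\theta_k$, the coefficients on the right --- which a priori might involve the $(k+1)$-st order jet coordinates of $\theta_{k+1}$ --- are in fact determined by $\theta_k$ alone, justifying the notation $(D_\sigma(\psi^i_X))(\theta_k)$ used in the statement.

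There is no genuine obstacle here: the argument is a direct reading of the coordinate formulas already established. The only conceptual point worth underlining is the role of the condition $X\in L^0_p$ in suppressing the horizontal contribution, which is what turns the isotropy condition into a purely algebraic system in the jet coordinates $X^i_j(p)$ and $X^i_{j_1\dots j_r}(p)$ with $r\leq k+2$.
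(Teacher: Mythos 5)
Your argument is correct and follows essentially the same route as the paper, which derives the proposition directly from the decomposition \eqref{VctFldLftk} together with \eqref{Hrs-Evol} and \eqref{Evol1}: the condition $X_p=0$ kills the horizontal component, the vertical component is read off in the basis $\partial/\partial u^i_\sigma$, and the same observation justifies writing $\theta_k$ in place of $\theta_{k+1}$. Nothing is missing.
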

(We write $D_{\sigma}(\,\psi^i_{X}\,)\,)(\theta_k)$ in
\eqref{IstrpAlgk} instead
$D_{\sigma}(\,\psi^i_{X}\,)\,)(\theta_{k+1})$ because from $X_p=0$
we have that system \eqref{IstrpAlgk} depends on $\theta_k$ and it
is independent of $\theta_{k+1}$.)
\bigskip

Let $\theta_0\in J^0\pi$ and $p=\pi(\theta_0)$. From
\eqref{IstrpAlgk}, we get that the isotropy algebra
$\g_{\theta_0}$ of the point $\theta_0$ is defined by the
equations
\begin{equation}\label{IstrAlg}\left\{
  \begin{aligned}
    - 2u^0X^1_1 + u^0X^2_2 - u^1X^2_1
    &+ X^2_{11}=0\\
    - 3u^0X^1_2 - u^1X^1_1 - 2u^2X^2_1
    &- X^1_{11} + 2X^2_{12}=0\\
    - 2u^1X^1_2 - u^2X^2_2 - 3u^3X^2_1
    &- 2X^1_{12} + X^2_{22}=0\\
    -  u^2X^1_2 + u^3X^1_1 - 2u^3X^2_2
    &- X^1_{22}=0
  \end{aligned}\right.
\end{equation}

It follows from \eqref{IstrAlg} that
$$
  \rho_{2,1}(\,\g_{\theta_0}\,)=L^0_p/L^1_p\,.
$$

Let
$$
  g_{\theta_0}=\g_{\theta_0}\cap\;(L^1_p/L^2_p)\,.
$$
Obviously, it is a commutative subalgebra in $\g_{\theta_0}$.
From \eqref{IstrAlg}, we get that $g_{\theta_0}$ is defined by the
equations
\begin{equation}\label{Smbl}\left\{
  \begin{aligned}
     X^2_{11}=0\\
     X^1_{11} - 2X^2_{12}=0\\
     2X^1_{12} - X^2_{22}=0\\
     X^1_{22}=0
  \end{aligned}\right.
\end{equation}

It is clear that $g_{\theta_0}$ and $g_{\tilde\theta_0}$ are
canonically isomorphic for any $\theta_0,\tilde\theta_0\in
J^0\pi$. Therefore we shall write $g$ instead $g_{\theta_0}$.

It follows from \eqref{Smbl} that
\begin{equation}\label{Dim_g}
  \dim g=2
\end{equation}
and we can choose
\begin{equation}\label{Gnrtrs_g2}
  \begin{aligned}
    e_1&=2\frac{\partial}{\partial x^1}\otimes (dx^1\odot dx^1)
      +\frac{\partial}{\partial x^2}\otimes (dx^1\odot dx^2)\,,\\
    e_2&=2\frac{\partial}{\partial x^2}\otimes (dx^2\odot dx^2)
     +\frac{\partial}{\partial x^1}\otimes (dx^1\odot dx^2)
  \end{aligned}
\end{equation}
as independent generators of $g$.

It is easy to check that the 1--prolongation $g^{(1)}$ of $g$ is
trivial, that is
\begin{equation}\label{FstPrlng}
  g^{(1)}=\{0\}\,.
\end{equation}

\subsection{Isotropy spaces}
By definition, put
\begin{gather}\label{IstrpSpc}
  \A_{\theta_{k+1}}=\bigl\{\;[X]^{2+k}_p\in W_p/L^{2+k}_p\;\bigr|
  \;X^{(k)}_{\theta_k}\in\Hr_{\theta_{k+1}}\;\bigr\}\,,\\
  k=0,1,\ldots,\infty\,.\notag
\end{gather}
From \eqref{VctFldLftk}, \eqref{Hrs-Evol}, and \eqref{Evol1}, we
get
\begin{proposition}\label{IstrpSpcEqv}
  $[X]^{2+k}_p\in\A_{\theta_{k+1}}$ iff $[X]^{2+k}_p$ is a
  solution of the system of linear equations
  \begin{equation}\label{IstrpSpck+1}
    \bigl(\,D_{\sigma}(\,\psi^i_{X}\,)\,\bigr)(\theta_{k+1})=0\,,\quad
    0\leq|\sigma|\leq k\,.
  \end{equation}
\end{proposition}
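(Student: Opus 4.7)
The plan is to unfold the definition \eqref{IstrpSpc} of $\A_{\theta_{k+1}}$ using the direct sum decomposition $T_{\theta_k}J^k\pi=\Hr_{\theta_{k+1}}\oplus T_{\theta_k}\bigl(\pi^{-1}(p)\bigr)$ together with the explicit formulas \eqref{VctFldLftk}--\eqref{Evol1} already established. This reduces the condition $X^{(k)}_{\theta_k}\in\Hr_{\theta_{k+1}}$ to the vanishing of the vertical component in the unique decomposition \eqref{VctFldLftk}, which by \eqref{Hrs-Evol} is exactly $\re^{\theta_{k+1}}_{\psi(X)}$.

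First I would rewrite this vertical part in coordinates using \eqref{Evol1}, which expresses $\re^{\theta_{k+1}}_{\psi(X)}$ as a linear combination of the basis vectors $\partial/\partial u^i_\sigma$ (with $i=0,\ldots,3$ and $0\le|\sigma|\le k$) whose coefficients are precisely the numbers $\bigl(D_\sigma(\psi^i(X))\bigr)(\theta_{k+1})$. Since these coordinate vectors form a basis of $T_{\theta_k}\bigl(\pi^{-1}(p)\bigr)$, the vanishing of the whole sum is equivalent to the vanishing of every one of its coefficients, which is precisely the system \eqref{IstrpSpck+1}. Next I would verify well-definedness of these equations on $W_p/L^{2+k}_p$: inspection of \eqref{DfrmtnVlst1} shows that $\psi^i(X)$ at a $1$-jet is expressed through the coefficients of the equation and derivatives of $X$ of order at most~$2$, hence $D_\sigma(\psi^i(X))$ involves derivatives of $X$ of order at most $|\sigma|+2\le k+2$. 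Thus the left-hand sides of \eqref{IstrpSpck+1} depend on $X$ only through $[X]^{k+2}_p$, so the system legitimately cuts out a subset of $W_p/L^{2+k}_p$.

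There is essentially no serious obstacle: the argument is a direct translation of the explicit formulas \eqref{Evol1} and \eqref{DfrmtnVlst1} through the horizontal/vertical splitting, and is the exact analogue — with $\theta_k$ replaced by $\theta_{k+1}$ — of the derivation that yielded the characterization \eqref{IstrpAlgk} of $\g_{\theta_k}$ in the preceding proposition. The only mild subtlety worth flagging is that here the point $\theta_{k+1}$ genuinely enters the equations, since we no longer have $X_p=0$ at our disposal to collapse the dependence of $D_\sigma(\psi^i(X))$ on the $(k+1)$-jet coordinates, contrary to the situation described in the parenthetical remark following \eqref{IstrpAlgk}.
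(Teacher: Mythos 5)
Your argument is correct and is exactly the route the paper takes: the paper derives the proposition directly from the decomposition \eqref{VctFldLftk} together with \eqref{Hrs-Evol} and \eqref{Evol1}, i.e.\ the condition $X^{(k)}_{\theta_k}\in\Hr_{\theta_{k+1}}$ is the vanishing of the vertical component $\re^{\theta_{k+1}}_{\psi(X)}$, whose coordinates are the left-hand sides of \eqref{IstrpSpck+1}. Your additional remarks on well-definedness modulo $L^{2+k}_p$ and on the genuine dependence on $\theta_{k+1}$ (in contrast to the isotropy-algebra case) are correct and consistent with the paper's parenthetical comment after \eqref{IstrpAlgk}.
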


We say that $\A_{\theta_{k+1}}$ is the {\it isotropy spase} of
$\theta_{k+1}$.
\begin{theorem}\label{PrpIstrpSpc}
  \begin{enumerate}
    \item $\rho_{k+2,k+1}(\A_{\theta_{k+1}})\subset \A_{\theta_k}$.
    \item $[\,\cdot\,,\,\cdot\,]: \A_{\theta_{k+1}}\times
      \A_{\theta_{k+1}}\to \A_{\theta_k}$.
  \end{enumerate}
\end{theorem}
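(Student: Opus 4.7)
The plan is to read both claims off the explicit linear description of isotropy spaces given by Proposition \ref{IstrpSpcEqv}, after recording one jet-order bookkeeping fact: since $\psi(X)$ is defined on $J^1\pi$ and depends on $[X]_p^2$, the function $D_\sigma\psi^i(X)$ lives on $J^{|\sigma|+1}\pi$ and depends only on $[X]_p^{|\sigma|+2}$. Part (1) is then immediate: among the equations $D_\sigma\psi^i(X)(\theta_{k+1})=0$ with $0\le|\sigma|\le k$ defining $\A_{\theta_{k+1}}$, those with $|\sigma|\le k-1$ live on $J^k\pi$ and involve only $[X]_p^{k+1}$, so evaluating them at $\theta_{k+1}$ is the same as evaluating them at $\theta_k=\pi_{k+1,k}(\theta_{k+1})$ on the projected jet $\rho_{k+2,k+1}([X]_p^{k+2})=[X]_p^{k+1}$, and these are precisely the defining equations of $\A_{\theta_k}$.

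For part (2) the central step is to derive the identity $\psi([X,Y])=\re_{\psi(X)}\psi(Y)-\re_{\psi(Y)}\psi(X)$. I would obtain it from the Lie algebra homomorphism proposition of the previous subsection, which gives $[X^{(\infty)},Y^{(\infty)}]=[X,Y]^{(\infty)}$. Substituting the decomposition \eqref{InftPrlng} in both sides and expanding the four brackets: the horizontal parts $X^jD_j$ and $Y^lD_l$ bracket to $[X,Y]^jD_j$; the cross brackets vanish because $[D_j,\re_\phi]=0$ (checked on generators $u^i_\sigma$ and $x^l$) and $\re_\phi$ annihilates functions on the base; and $[\re_{\psi(X)},\re_{\psi(Y)}]$ is itself evolutionary (vertical and commuting with every $D_j$), with generating function computed by testing on $u^i$ to be $\re_{\psi(X)}\psi(Y)-\re_{\psi(Y)}\psi(X)$. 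Matching vertical parts yields the claimed identity.

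Applying $D_\sigma$ and using once more that $D_\sigma$ and $\re_\phi$ commute, the identity becomes $D_\sigma\psi^i([X,Y])=\re_{\psi(X)}D_\sigma\psi^i(Y)-\re_{\psi(Y)}D_\sigma\psi^i(X)$. For $0\le|\sigma|\le k-1$ the function $D_\sigma\psi^i(Y)$ depends on the coordinates $u^j_\tau$ only for $|\tau|\le k$, so $\re_{\psi(X)}(D_\sigma\psi^i(Y))$ is a linear combination of the quantities $D_\tau\psi^j(X)$ with $|\tau|\le k$; by the hypothesis $[X]_p^{k+2}\in\A_{\theta_{k+1}}$ each of these vanishes at $\theta_{k+1}$. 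The second summand is handled symmetrically, so $D_\sigma\psi^i([X,Y])(\theta_{k+1})=0$ for every $|\sigma|\le k-1$. By the bookkeeping above this is the same as the vanishing at $\theta_k$ on the bracket $[\,[X]_p^{k+2},[Y]_p^{k+2}\,]=[[X,Y]]_p^{k+1}\in W_p/L_p^{k+1}$, which is precisely the defining system of $\A_{\theta_k}$.

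The only genuinely delicate step is obtaining the identity for $\psi([X,Y])$; everything else reduces to matching the systems defining $\A_{\theta_{k+1}}$ and $\A_{\theta_k}$ via careful tracking of which jet coordinates each $D_\sigma\psi^i$ actually sees.
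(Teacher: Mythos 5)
Your proof is correct and follows essentially the same route as the paper: both arguments rest on the decomposition \eqref{InftPrlng}, the relations $[D_j,\re_{\phi}]=0$ and $[\re_{\phi},\re_{\psi}]=\re_{\{\phi,\psi\}}$, and the observation that the total derivatives of $\{\psi(X),\psi(Y)\}$ up to order $k-1$ are linear combinations of the quantities $D_{\tau}\psi(X)$, $D_{\tau}\psi(Y)$ with $|\tau|\le k$, all of which vanish at $\theta_{k+1}$ by hypothesis. Your write-up is merely a bit more explicit than the paper's, both about the Leibniz expansion behind the final vanishing step and about part (1), which the paper dismisses as obvious.
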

\begin{proof} The first statement is obvious.

  Prove the second one. Let
  $[X]^{2+k}_p,[Y]^{2+k}_p\in\A_{\theta_{k+1}}$, let
  $\theta_{\infty}\in\pi_{\infty}^{-1}(p)\,,\linebreak
  \theta_k=\pi_{\infty,k}(\theta_{\infty})\,,\;
  \theta_{k-1}=\pi_{k,k-1}(\theta_k)$,
  and let
  $$
    X = X^1\frac{\partial}{\partial x^1}
      + X^2\frac{\partial}{\partial x^2}\,,\;
    Y = Y^1\frac{\partial}{\partial x^1}
      + Y^2\frac{\partial}{\partial x^2}\,.
  $$
  Then
  $$
    \bigl[\,[X]^{2+k}_p,[Y]^{2+k}_p\,\bigr]
    =\bigl[\,[\,X\,,Y\,]\,\bigr]^{2+k-1}_p
  $$
  and
  $$
    \bigl[\,[\,X\,,Y\,]\,\bigr]^{2+k-1}_p\in\A_{\theta_k}\quad
    \text{iff}\quad
    [\,X,\,Y\,]^{(k-1)}_{\theta_{k-1}}\in\Hr_{\theta_k}\,.
  $$
  We have
  \begin{multline*}
    [\,X,\,Y\,]^{(k-1)}_{\theta_{k-1}}
    =(\pi_{\infty,k-1})_*\bigl[\,X,\,
    Y\,\bigr]^{(\infty)}_{\theta_{\infty}}
    =(\pi_{\infty,k-1})_*\bigl[\,X^{(\infty)},\,
    Y^{(\infty)}\,\bigr]_{\theta_{\infty}}\\
    =(\pi_{\infty,k-1})_*\bigl[\,X^1D_1+X^2D_2+\re_{\psi(X)},\;
    Y^1D_1+ Y^2D_2+\re_{\psi(Y)}\,\bigr]_{\theta_{\infty}}\,.
  \end{multline*}
  Taking into account the well known relations (see \cite{KV})
  $$
    [\,D_1,\,D_2\,]=[\,D_1,\,\re_{\psi}\,]=[\,D_2,\,\re_{\psi}\,]
    =0\quad\text{and}\quad[\,\re_{\phi},\,\re_{\psi}\,]=
    \re_{\{\phi,\psi\}}\,,
  $$
  where $\{\phi,\psi\}=\re_{\phi}(\psi)-\re_{\psi}(\phi)$, we get
   \begin{multline*}
    [\,X,\,Y\,]^{(k-1)}_{\theta_{k-1}}
    =(\pi_{\infty,k-1})_*
    \Bigl(\,(X^1Y^1_1+X^2Y^1_2-Y^1X^1_1-Y^2X^1_2)D_1+\\
    +(X^1Y^2_1+X^2Y^2_2-Y^1X^2_1-Y^2X^2_2)D_2
    +[\,\re_{\psi(X)}\,,\,\re_{\psi(Y)}\,]
    \,\Bigr)_{\theta_{\infty}}=\\
    =\Hr_{\theta_k}[\,X,\,Y\,]^{(k-1)}
    +\re^{\theta_k}_{\{\psi(X),\psi(Y)\}}\,.
  \end{multline*}
From \eqref{DfrmtnVlst1}, we obtain
\begin{multline*}
  \{\psi(X),\psi(Y)\}^i
  =\psi^{i'}(X)\frac{\partial\psi^i(Y)}{\partial u^{i'}}
   +D_j(\psi^{i'}(X))\frac{\partial\psi^i(Y)}{\partial u^{i'}_j}\\
   -\psi^{i'}(Y)\frac{\partial\psi^i(X)}{\partial u^{i'}}
   -D_j(\psi^{i'}(Y))\frac{\partial\psi^i(X)}{\partial
   u^{i'}_j}\,.
\end{multline*}
  From \eqref{Evol1}, we get now that
  $\re^{\theta_k}_{\{\psi(X),\psi(Y)\}}=0$.
\end{proof}

\section{Differential invariants}
\subsection{Horizontal subspaces}
We shall say that a 2-dimensional subspace $H\subset W_p/L_p^k$ is
{\it horisontal} if $$
  \rho_{k}(H)=V_p\,.
$$

Let $\theta_k\in J^k\pi$ and
$\theta_{k+1}\in\pi^{-1}_{k+1,k}(\theta_k)$; then it is clear that
\begin{equation}\label{Invlv}
  \g_{\theta_k}\subset\A_{\theta_{k+1}}\;\;
  \forall\;\theta_{k+1}\in\pi_{k+1,k}^{-1}(\theta_k)\,.
\end{equation}
It is obvious that a 2-dimensional subspace
$H\subset\A_{\theta_{k+1}}$ is horizontal iff $$
  \A_{\theta_{k+1}}=H\oplus\g_{\theta_k}\,.
$$ Any two horizontal subspaces $H,\tilde
H\subset\A_{\theta_{k+1}}$ define the linear function $$
  f_{H,\tilde H}:V_p\to\g_{\theta_k}\,,\quad
  f_{H,\tilde H}: X\mapsto (\rho_{k+2}|_H)^{-1}(X)
  -(\rho_{k+2}|_{\tilde H})^{-1}(X)\,.
$$ It is clear that for any horizontal subspace
$H\subset\A_{\theta_{k+1}}$ and for any linear function
$f:V\to\g_{\theta_k}$, there exist a unique horizontal subspace
$\tilde H\subset\A_{\theta_{k+1}}$ with $f=f_{H,\tilde H}$.
\bigskip

Further in this subsection, we shall investigate horizontal
subspaces of $\A_{\theta_1}$.

By $H_p$ we denote the horizontal subspace in $W_p/L^1_p$
generated by constant vector fields.

By $H_{\theta_1}$ we denote a horizontal subspace in
$\A_{\theta_1}$ with
\begin{equation}\label{HrzntSpc0}
  \rho_{2,1}(H_{\theta_1})=H_p\,.
\end{equation}
From $$
  \rho_{2,1}(\A_{\theta_1})=W_p/L^1_p\,,
$$ we have that horizontal subspaces $H_{\theta_1}$ exist.
Obviously, $H_{\theta_1}$ is defined by
\begin{equation}\label{HrzntSpc1}
  H_{\theta_1}=\bigl\{\;[X]^2_p=(\,X^i,0,X^i_{\sigma}\,)
  \,,\; i=1,2\,,\;|\sigma|=2\;\bigr\}
\end{equation}
in the standard coordinates.

It is clear now that for any two horizontal subspaces
$H_{\theta_1},\tilde H_{\theta_1}$ satisfying to
\eqref{HrzntSpc0}, we get $$
  f_{H_{\theta_1},\tilde H_{\theta_1}}:V_p\to g\,.
$$

Taking into account that $g\neq \{0\}$, we obtain that there exist
a lot of horizontal subspaces satisfying to \eqref{HrzntSpc0}. We
choose one of them in the following way.

A horizontal subspace $H_{\theta_1}$ defines the form
$\omega_{H_{\theta_1}}\in L_p^0/L_p^1\otimes\wedge^2V_p^*$ by the
formula $$
  \omega_{H_{\theta_1}}(X,Y)=
  \bigl[\,(\rho|_{H_{\theta_1}})^{-1}(X),
  (\rho|_{H_{\theta_1}})^{-1}(Y)\,\bigr]\quad\forall\,X,Y\in V_p\,.
$$ From the Spenser complex
\begin{equation}\label{SpnsrCmplx1}
 0\to g^{(1)}\xrightarrow{\partial_{3,0}} g\otimes
 V_p^*\xrightarrow{\partial_{2,1}}
 L^0_p/L^1_p\otimes
 \wedge^2V_p^*\xrightarrow{\partial_{1,2}} 0\,,
\end{equation}
we get that $\omega_{H_{\theta_1}}$ defines the Spenser cohomology
class $\{\omega_{H_{\theta_1}}\}\in H_p^{1,2}$.
\begin{proposition} The cohomology class $\{\omega_{H_{\theta_1}}\}$ is
  trivial.
\end{proposition}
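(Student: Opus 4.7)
The plan is to prove the stronger statement that the ambient cohomology group $H_p^{1,2}$ itself vanishes, from which triviality of every class in $H_p^{1,2}$, and in particular of $\{\omega_{H_{\theta_1}}\}$, is automatic.

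From the complex \eqref{SpnsrCmplx1}, $H_p^{1,2}$ is the cokernel of $\partial_{2,1}\colon g\otimes V_p^*\to(L_p^0/L_p^1)\otimes\wedge^2V_p^*$. A dimension count is the first step: by \eqref{Dim_g} and $\dim V_p=2$, the source has dimension $2\cdot 2=4$, while the target has dimension $\dim(L_p^0/L_p^1)\cdot\dim\wedge^2V_p^*=4\cdot 1=4$. So $\partial_{2,1}$ is surjective as soon as it is injective, and our task reduces to showing $\ker\partial_{2,1}=0$.

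For the injectivity, I would identify $\ker\partial_{2,1}$ with the image of $\partial_{3,0}\colon g^{(1)}\to g\otimes V_p^*$. Unwinding the definition of $\partial_{2,1}$, an element $f\in g\otimes V_p^*$ lies in the kernel iff the bilinear map $(v,w)\mapsto[v,f(w)]$ is symmetric in $v,w\in V_p$; viewing $f$ inside $V_p\otimes V_p^*\otimes V_p^*$ through the inclusion $g\subset L_p^1/L_p^2\cong V_p\otimes S^2V_p^*$, this means $f$ is symmetric in all three of its covariant slots, i.e., $f\in V_p\otimes S^3V_p^*$ and so (since $f(w)\in g$ for all $w$) $f$ comes from an element of the first prolongation $g^{(1)}$. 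Since $g^{(1)}=\{0\}$ by \eqref{FstPrlng}, we conclude $\ker\partial_{2,1}=0$, hence $\partial_{2,1}$ is an isomorphism, $H_p^{1,2}=0$, and the proposition follows.

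The main, and in truth only, obstacle is the identification $\ker\partial_{2,1}\cong g^{(1)}$. If that abstract step is felt to be too quick for the reader, it can be replaced by a direct coordinate computation: write $f=\sum_{i,j}\alpha_{ij}\,e_i\otimes dx^j$ using the generators $e_1,e_2$ of \eqref{Gnrtrs_g2}, expand the brackets $[\partial/\partial x^a,e_i]$ in the standard coordinates, and verify that the resulting $4\times 4$ linear system on the coefficients $\alpha_{ij}$ is nonsingular. Either way, the surjectivity of $\partial_{2,1}$, and hence the triviality of $\{\omega_{H_{\theta_1}}\}$, is an immediate consequence of the dimension equality together with this single injectivity check.
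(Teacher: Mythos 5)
Your proof is correct and follows essentially the same route as the paper: deduce injectivity of $\partial_{2,1}$ from $g^{(1)}=\{0\}$ via the identification $\ker\partial_{2,1}\cong g^{(1)}$, then use the dimension count $\dim(g\otimes V_p^*)=4=\dim\bigl(L_p^0/L_p^1\otimes\wedge^2V_p^*\bigr)$ to conclude surjectivity and hence $H_p^{1,2}=0$. The only difference is that you spell out the symmetry argument behind $\ker\partial_{2,1}\cong g^{(1)}$, which the paper leaves implicit.
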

\begin{proof} From \eqref{FstPrlng} we get that $\partial_{2,1}$
  is an injection in \eqref{SpnsrCmplx1}. From \eqref{Dim_g}, we
  obtain $\dim g\otimes V_p^*=4$. Obviously,
  $\dim L^0_p/L^1_p\otimes\wedge^2V_p^*=4$. As a result, we
  obtain $\im\partial_{2,1}=\ker\partial_{1,2}$ in
  \eqref{SpnsrCmplx1}.
\end{proof}
\begin{corollary} There exists a unique horizontal subspace
$H_{\theta_1}\subset\A_{\theta_1}$ with $\omega_{H_{\theta_1}}=0$.
\end{corollary}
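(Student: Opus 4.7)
The plan is to reduce the corollary to an affine equation in $g \otimes V_p^*$. Any two horizontal subspaces $H_{\theta_1}, \tilde H_{\theta_1}$ satisfying \eqref{HrzntSpc0} differ by a single element $f = f_{H_{\theta_1}, \tilde H_{\theta_1}} \in g \otimes V_p^*$, so existence and uniqueness of an $H_{\theta_1}$ with $\omega_{H_{\theta_1}} = 0$ translate into existence and uniqueness of a solution $f$ to a linear equation in $g \otimes V_p^*$.

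The first step I would carry out is to derive the transformation formula
$$\omega_{\tilde H_{\theta_1}} = \omega_{H_{\theta_1}} - \partial_{2,1}(f).$$
Writing $\sigma, \tilde\sigma$ for the inverses of $\rho_{2,1}$ restricted to $H_{\theta_1}$ and $\tilde H_{\theta_1}$, one has $\tilde\sigma(X) = \sigma(X) - f(X)$ with $f(X) \in g \subset L^1_p/L^2_p$. Expanding
$$[\tilde\sigma(X), \tilde\sigma(Y)] = [\sigma(X), \sigma(Y)] - [\sigma(X), f(Y)] - [f(X), \sigma(Y)] + [f(X), f(Y)]$$
in $W_p/L^1_p$ and using the filtration properties $[L^1_p, L^1_p] \subset L^2_p$ and $[L^0_p, L^1_p] \subset L^1_p$, the last term vanishes and the cross terms reduce via \eqref{brkt2} to the constant-part brackets $[X, f(Y)]$ and $[Y, f(X)]$. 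Their signed combination is precisely $(\partial_{2,1}(f))(X, Y)$.

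With this formula in hand, existence follows from the previous proposition: pick any $H_{\theta_1}$ satisfying \eqref{HrzntSpc0} (for instance the one given by \eqref{HrzntSpc1}); since $\omega_{H_{\theta_1}} \in \im \partial_{2,1}$, choose $f \in g \otimes V_p^*$ with $\partial_{2,1}(f) = \omega_{H_{\theta_1}}$, and let $\tilde H_{\theta_1}$ be the shift of $H_{\theta_1}$ by $f$; then $\omega_{\tilde H_{\theta_1}} = 0$. For uniqueness, if $H$ and $\tilde H$ both satisfy $\omega = 0$, the transformation formula yields $\partial_{2,1}(f_{H, \tilde H}) = 0$; by exactness of \eqref{SpnsrCmplx1} at $g \otimes V_p^*$ we get $f_{H, \tilde H} \in \im \partial_{3,0}$, and since $g^{(1)} = \{0\}$ by \eqref{FstPrlng}, this forces $f_{H, \tilde H} = 0$, so $H = \tilde H$. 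The main obstacle is the transformation formula itself: the bookkeeping of which bracket terms survive in $W_p/L^1_p$, together with the identification of the surviving piece as $\partial_{2,1}(f)$, is where all the real work lies; once that identity is in place, both halves of the corollary are simply the two exactness statements of the Spenser complex \eqref{SpnsrCmplx1} pinched between $g^{(1)} = 0$ and the previous proposition.
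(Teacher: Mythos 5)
Your proof is correct and follows essentially the same route as the paper: the paper likewise writes $\omega_{H_{\theta_1}}=\omega_{\tilde H_{\theta_1}}+\partial_{2,1}(f_{H_{\theta_1},\tilde H_{\theta_1}})$, gets uniqueness from the injectivity of $\partial_{2,1}$ (a consequence of $g^{(1)}=\{0\}$) and existence from the triviality of the class $\{\omega_{H_{\theta_1}}\}$ by shifting $H_{\theta_1}$ by an $h$ with $\partial_{2,1}(h)=\omega_{H_{\theta_1}}$. The only difference is that you actually derive the transformation formula from the filtration properties of the bracket, whereas the paper asserts it without proof; your derivation is sound.
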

\begin{proof} Prove the uniqueness. Suppose
$H_{\theta_1},\,\tilde H_{\theta_1}$ are horizontal sub\-spa\-ces
of $\A_{\theta_1}$ with $\omega_{H_{\theta_1}}=\omega_{\tilde
H_{\theta_1}}=0$. We have $\omega_{H_{\theta_1}}=\omega_{\tilde
H_{\theta_1}} +\partial_{2,1}(f_{H_{\theta_1},\tilde
H_{\theta_1}})$. Therefore, $\partial_{2,1}(f_{H_{\theta_1},\tilde
H_{\theta_1}})=0$. Taking into account that $\partial_{2,1}$ is an
injection, we get that $f_{H_{\theta_1},\tilde H_{\theta_1}}=0$.
This means that $H_{\theta_1}=\tilde H_{\theta_1}$

Prove the existence. We have $\{\omega_{H_{\theta_1}}\}=\{0\}$.
Therefore there exist $h\in g\otimes V_p^*$ with
$\omega_{H_{\theta_1}}=\partial_{2,1}(h)$. It follows that the
horizontal subspace $$
  \tilde H_{\theta_1}=\bigl\{\,(\rho_2|_{H_{\theta_1}})^{-1}(X)-h(X)\,,
  \;\;X\in V_p\,\bigr\}
$$ satisfies to the property $\omega_{\tilde H_{\theta_1}}=0$.
\end{proof}

Now, we express the horizontal space $H_{\theta_1}$ with
$\omega_{H_{\theta_1}}=0$ in terms of standard coordinate $x^1,
x^2, u^i(\theta_1), u^i_j(\theta_1)$. Let $$
  \bigl(\rho_2|_{H_{\theta_1}}\bigr)^{-1}(X)=(\,X^i,\;0\,,\;f^i_{jk,r}X^r\,)
  \,,\;\;\forall\,X\in V_p\,.
$$ Then the property $\omega_{H_{\theta_1}}=0$ means that
\begin{equation}\label{Smmtr}
  f^i_{jk,r}=f^i_{jr,k}\,.
\end{equation}
From proposition \ref{IstrpSpcEqv} we obtain that elements
$(\,X^i,\;0\,,\;f^i_{jk,r}X^r\,)\in H_{\theta_1}$ is a solutions
of the system $$
  \left\{\begin{aligned}
    -u^0_1X^1-u^0_2X^2 + f^2_{11,r}X^r=0\\
    -u^1_1X^1-u^1_2X^2 - f^1_{11,r}X^r + 2f^2_{12,r}X^r=0\\
    -u^2_1X^1-u^2_2X^2 - 2f^1_{12,r}X^r + f^2_{22,r}X^r=0\\
    -u^3_1X^1-u^3_2X^2 - f^1_{22,r}X^r=0
  \end{aligned}\right.
$$ From this system and \eqref{Smmtr}, we obtain
\begin{equation}\label{H_theta_1}
  \left\{\begin{aligned}
    f^2_{11,1}=u^0_1\,,\;\;f^2_{11,2}=f^2_{12,1}=u^0_2\,,\\
    f^2_{12,2}=f^2_{22,1}=\frac{1}{3}(\,2u^1_2-u^0_1\,)\,,\\
    f^2_{22,2}=-2u^3_1+u^2_2\,,\\
    f^1_{22,2}=-u^3_2\,,\;\;f^1_{22,1}=f^1_{12,2}=-u^3_1\,,\\
    f^1_{12,1}=f^1_{11,2}=\frac{1}{3}(\,u^1_2-2u^2_1\,)\,,\\
    f^1_{11,1}=2u^0_2-u^1_1\,.
  \end{aligned}\right.
\end{equation}

\subsection{The obstruction form}\label{SubSctObstrFrm}
Let $\theta_2\in J^2\pi$ and $\theta_1=\pi_{2,1}(\theta_2)$. It is
not difficult to prove that
\begin{equation}\label{Srjctn}
  \rho_{3,2}(\A_{\theta_2})=\A_{\theta_1}.
\end{equation}
Let $H_{\theta_1}$ be the horizontal subspace of $\A_{\theta_1}$
with $\omega_{H_{\theta_1}}=0$. From \eqref{Srjctn} and
\eqref{FstPrlng}, we get that there exist a unique horizontal
subspace $H_{\theta_2}\subset\A_{\theta_2}$ with
\begin{equation}\label{HrzntSpc2}
  \rho_{3,2}(H_{\theta_2})=H_{\theta_1}\,.
\end{equation}

It follows from item (2) of theorem \ref{PrpIstrpSpc} that
$H_{\theta_2}$ defines the 2--form
$\omega_{\theta_2}\in\A_{\theta_1}\otimes\wedge^2V_p^*$ by the
formula $$
  \omega_{\theta_2}(X,Y)=
  \bigl[\,(\rho_3|_{H_{\theta_2}})^{-1}(X),\,
  (\rho_3|_{H_{\theta_2}})^{-1}(Y)\,\bigr]\quad\forall\,X,Y\in V_p\,.
$$ From $\omega_{H_{\theta_1}}=0$ we obtain $$
  \omega_{\theta_2}\in g\otimes(V_p^*\wedge V_p^*)
$$

Now we can define the horizontal differential 2-form
$\omega^{(2)}$ on $J^2\pi$ with values in $g$ by the following
formula
\begin{equation}\label{ObstrtnFrm}
  \omega^{(2)} : \theta_2\longmapsto\pi_2^*(\omega_{\theta_2})\,.
\end{equation}

Obviously, $H_{\theta_2}$ is defined by $$
  H_{\theta_2}=\bigl\{\;[X]^2_p=(\,X^i,\;0,\;f^i_{j_1j_2,r}X^r,\;
  f^i_{j_1j_2j_3,r}X^r\,)
  \;\bigr\}
$$ in the standard coordinates. Hence, $$
  \omega_{\theta_2}=2f^i_{j_1j_2[k,r]}\bigl(\frac{\partial}{\partial x^i}
  \otimes (dx^{j_1}\odot dx^{j_2})\bigr)\otimes\,(dx^k\wedge\,dx^r)\,.
$$ Taking into account \eqref{Dim_g} and \eqref{Gnrtrs_g2}, we get
\begin{equation}\label{Obstrtn}
  \omega^{(2)}=(\;F^1\cdot e_1+F^2\cdot e_2\;)
  \otimes\,(dx^1\wedge\,dx^2)\,,
\end{equation}
where $F^1=f^1_{11[1,2]}$ and $F^2=f^2_{22[1,2]}$.

Calculate the functions $F^1\,,\;F^2$. From proposition
\ref{IstrpSpcEqv} we obtain that elements $$
  (\,X^i,\;0,\;f^i_{j_1j_2,r}X^r,\;
  f^i_{j_1j_2j_3,r}X^r\,)\in H_{\theta_2}
$$ is a solutions of system \eqref{IstrpSpck+1} for $k=1$. From
this system and \eqref{H_theta_1}, we get
\begin{multline}\label{F1}
  F^1= 3u^0_{22}-2u^1_{12}+u^2_{11}\\
      +3u^3u^0_1-3u^2u^0_2+
      2u^1u^1_2-u^1u^2_1-3u^0u^2_2+6u^0u^3_1\,,
\end{multline}
\begin{multline}\label{F2}
    F^2= u^1_{22}-2u^2_{12}+3u^3_{11}\\
    -3u^0u^3_2+3u^1u^3_1
       - 2u^2u^2_1+u^2u^1_2+3u^3u^1_1-6u^3u^0_2\,.
\end{multline}
Note that first the coefficients $F^1$ and $F^1$ were obtained by
Cartan in \cite{Crtn} as unique nonzero coefficients of the
curvature form of the projective connection corresponding to
equation \eqref{eq}.

Thus we obtain the following expession of $\omega^{(2)}$ in the
standard coordinates
\begin{multline}\label{Obstrtn1}
  \omega^{(2)}=\Bigl(F^1\bigl(2\frac{\partial}{\partial x^1}
  \otimes (dx^1\odot dx^1)
  +\frac{\partial}{\partial x^2}\otimes (dx^1\odot dx^2)\bigr)\\
  +F^2\bigl(2\frac{\partial}{\partial x^2}\otimes (dx^2\odot dx^2)
  +\frac{\partial}{\partial x^1}\otimes
  (dx^1\odot dx^2)\bigr)\Bigr)\\
  \otimes\,(dx^1\wedge\,dx^2)\,,
\end{multline}
where $F^1$ and $F^2$ are defined by \eqref{F1} and \eqref{F2}
respectively.
\bigskip

We recall, that a differential form defined on $J^k\pi$ is {\it a
differential invariant of the action of $\Gamma$ on $\pi$} if it
is invariant w.r.t. the pseudogroup $\Gamma^{(k)}$.
\begin{theorem}\label{DffInv} The form $\omega^{(2)}$ is a
  differential invariant of the action of $\Gamma$ on $\pi$.
\end{theorem}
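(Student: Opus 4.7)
The plan is to establish $\Gamma^{(2)}$-invariance of $\omega^{(2)}$ by demonstrating that every step of its construction in subsection~\ref{SubSctObstrFrm} is natural under the action of $\Gamma$. I would structure the argument around three successive naturality statements: equivariance of the isotropy spaces, equivariance of the canonical horizontal subspaces, and equivariance of the bracket form.

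First, I would verify equivariance of the isotropy data: for any $f\in\Gamma$, the pushforward $f^{(k)}_{*}$ carries $\A_{\theta_{k+1}}$ onto $\A_{f^{(k+1)}(\theta_{k+1})}$. This follows from Proposition~\ref{IstrpSpcEqv}, because the defining condition \eqref{IstrpSpck+1} is expressed through the intrinsic horizontal distribution $\Hr_{\theta_{k+1}}$ and the decomposition \eqref{VctFldLftk}, both preserved by the lift \eqref{LftTr}. The canonical identifications noted before \eqref{Dim_g} then give $g$ a well-defined $\Gamma$-equivariant structure.

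Next, the Lie bracket on $W_p$ is natural, $f_{*}[X,Y]=[f_{*}X,f_{*}Y]$, so the bracket-type forms $\omega_{H_{\theta_1}}$ and $\omega_{\theta_2}$ are equivariant under $f$. Combined with the Spenser-cohomological uniqueness of $H_{\theta_1}$ provided by the Corollary after \eqref{SpnsrCmplx1}, and with the prolongation uniqueness of $H_{\theta_2}$ from \eqref{Srjctn} and \eqref{FstPrlng}, this yields $f^{(2)}_{*}H_{\theta_2}=H_{f^{(2)}(\theta_2)}$ and $f^{(2)}_{*}\omega_{\theta_2}=\omega_{f^{(2)}(\theta_2)}$ in $g\otimes\wedge^{2}V_{p}^{*}$. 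Horizontality built into \eqref{ObstrtnFrm} then promotes this pointwise equivariance to $(f^{(2)})^{*}\omega^{(2)}=\omega^{(2)}$, which is the invariance claim.

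The main obstacle is the coordinate dependence of the reference subspace $H_p$ of constant vector fields in \eqref{HrzntSpc0}: under a general $f\in\Gamma$ one has $f^{(0)}_{*}H_p\ne H_{f(p)}$, the discrepancy being an element of $g\otimes V_{p}^{*}$. To resolve this I would verify that altering $H_p$ by such an element only shifts $H_{\theta_1}$ by the same element inside $\A_{\theta_1}$, and then uniquely prolongs (thanks to $g^{(1)}=\{0\}$ in \eqref{FstPrlng}) to a shift of $H_{\theta_2}$ whose contribution to the bracket form is a Spenser coboundary sourced from $g^{(1)}\otimes V_{p}^{*}=\{0\}$. Consequently $\omega_{\theta_2}$ is independent of the auxiliary choice, and the equivariance established above is a genuine $\Gamma$-invariance rather than being tied to a particular trivialization, which completes the proof.
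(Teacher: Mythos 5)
Your overall strategy --- establishing invariance by showing that each step of the construction of $\omega^{(2)}$ is natural under $\Gamma$ --- is genuinely different from the paper's proof, which instead factors $[f]^4_p$ into an affine jet times a jet with trivial $1$--jet, reduces the latter case (using horizontality of $\omega^{(2)}$) to the identities $F^i\bigl(f^{(2)}(\theta_2)\bigr)=F^i(\theta_2)$, and verifies these by a direct coordinate computation showing that the restrictions of $F^1,F^2$ to a fiber $J^2_p\pi$ are first integrals of every $\xi^{(2)}$ with $[\xi]^{\infty}_p\in L^1_p$. The parts of your argument concerning equivariance of the isotropy spaces $\A_{\theta_{k+1}}$, of the algebras $\g_{\theta_k}$ and $g$, and of the bracket forms are correct, and they do dispose of the affine case, since affine maps carry constant vector fields to constant vector fields and hence preserve the normalization \eqref{HrzntSpc0}.

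The gap is exactly where you locate ``the main obstacle,'' and your proposed resolution does not work as stated. For $f$ with $[f]^1_p=[\id]^1_p$ the discrepancy between $f_*H_p$ and $H_{f(p)}$ is a linear map $V_p\to L^0_p/L^1_p$, namely $\partial(h)$ for the second--order part $h\in L^1_p/L^2_p\cong V_p\otimes S^2V_p^*$ of $f$; this is a six--parameter family, whereas $g$ is the two--dimensional subspace of $L^1_p/L^2_p$ cut out by \eqref{Smbl}, so the discrepancy is not an element of $g\otimes V_p^*$ --- it does not even lie in that space. Consequently the renormalized subspace $H'_{\theta_1}$ over $f_*H_p$ differs from $H_{\theta_1}$ by a map $V_p\to\g_{\theta_0}$ with a nonzero $L^0_p/L^1_p$--component, its unique prolongation $H'_{\theta_2}$ differs from $H_{\theta_2}$ by a map $V_p\to\g_{\theta_1}$, and the resulting change in the bracket form consists of genuine Spenser coboundaries $\partial(\,\cdot\,)$ of these corrections together with quadratic bracket terms; none of this is ``sourced from $g^{(1)}\otimes V_p^*=\{0\}$,'' and there is no formal reason for it to vanish. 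Proving that it does vanish --- i.e.\ that $\omega_{\theta_2}$ is independent of the admissible choice of $H_p$ --- is precisely the nontrivial content of the theorem, and it is the step the paper settles by the explicit first--integral computation for $F^1$ and $F^2$. Until you supply that verification, by your route or the paper's, the proof is incomplete.
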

\begin{proof} Let $f\in\Gamma$, let $p$ be a point from the domain
  of $f$, and let $\theta_2\in J_p^2\pi$. We should check that
  \begin{equation}\label{InvAct}
    (f^{(2)})^*\Bigl(\omega^{(2)}\bigr|_{f^{(2)}(\theta_2)}\Bigr)
    =\omega^{(2)}\bigr|_{\theta_2}\,.
  \end{equation}
  We shall check it in the standard coordinates. It is clear that the
  left side of \eqref{InvAct} is depend of $[f]^4_p$. This jet can
  be represented in the following way
  $$
    [f]^4_p=[f_1]^4_p\cdot[f_2]^4_p\,,
  $$
  where $[f_1]^4_p$ is jet of the affine transformation and
  $[f_2]^1_p=[\id]^1_p$\,. It can easily be checked that
  $\omega^{(2)}$ is invariant w.r.t. affine transformations.
  Therefore it remains to check that equation \eqref{InvAct} holds
  for an arbitrary point transformation $f$ with
  $[f]^1_p=[\id]^1_p$. Taking into account that
  $\omega^{(2)}$ is horizontal, we get that equation
  \eqref{InvAct} holds for a point transformation $f$ with
  $[f]^1_p=[\id]^1_p$ iff
  $$
    F^1\bigl(f^{(2)}(\theta_2)\bigr)=F^1(\theta_2)\,,\quad
    F^2\bigl(f^{(2)}(\theta_2)\bigr)=F^2(\theta_2)
  $$
  It is clear that the last equations hold iff the restrictions of
  $F^1\,,\;F^2$ to $J_p^2\pi$ are 1-st integrals for any vector
  field $\xi^{(2)}\bigr|_{J_p^2\pi}$ with
  $[\xi]^{\infty}_p\in L^1_p$. The last statement about $F^1$ and
  $F^2$ can be easy checked by direct calculations in standard
  coordinates.
\end{proof}
\bigskip

In his paper \cite{Crtn}, Cartan proved that equation \eqref{eq}
can be reduced to the linear form by a point transformation iff
the collection of its coefficients is a solution of the system of
PDEs
\begin{equation}\label{DfM}
  F^1=0\,,\quad F^2=0\,.
\end{equation}
This means that $\omega^{(2)}$ is a unique obstruction to the
linearizability of equations \eqref{eq} by point transformations.

Below, we give the independent proof of this fact.

Let $$
  M=\Bigl\{\;\theta_2\in J^2\pi\;\Bigl|\;
  \omega^{(2)}\bigr|_{\theta_2}=0\;\Bigr\}\,.
$$ From \eqref{Obstrtn}, it follows that $M$ is defined by system
of algebraic equations \eqref{DfM}.

By $\0$ we denote the zero section of $\pi$, by $0_k$ we denote
$[\0]^k_0\,,\, k=0,1,2,\ldots$
\begin{lemma}\label{ZeroStrFrm} $M=\Orb_{0_2}$.
\end{lemma}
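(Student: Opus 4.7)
The plan is to establish the two inclusions $\Orb_{0_2}\subseteq M$ and $M\subseteq\Orb_{0_2}$ separately, using Theorem \ref{DffInv} together with the orbit decomposition of $J^2\pi$ cited in the introduction.

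For the forward inclusion $\Orb_{0_2}\subseteq M$, I would first observe by direct inspection of the formulas \eqref{F1} and \eqref{F2} that $\omega^{(2)}\bigr|_{0_2}=0$: every monomial on the right-hand sides of \eqref{F1} and \eqref{F2} contains, as a factor, either a coefficient $u^i$ or a first/second derivative $u^i_j$, $u^i_{jk}$, and all of these vanish at the 2-jet of the zero section $\0$. Hence $0_2\in M$. Since $\omega^{(2)}$ is $\Gamma^{(2)}$-invariant by Theorem \ref{DffInv}, the zero set $M$ is $\Gamma^{(2)}$-invariant, so it contains the whole orbit $\Orb_{0_2}$.

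For the reverse inclusion $M\subseteq\Orb_{0_2}$, the same invariance shows that $M$ itself is a union of $\Gamma^{(2)}$-orbits. I would then invoke the orbit structure result from \cite{GYum} recalled in the introduction: $J^2\pi=\Orb_1\cup\Orb_2$ with $\dim\Orb_1=\dim J^2\pi$ and $\dim\Orb_2=\dim J^2\pi-2$, where $\Orb_2$ consists precisely of 2-jets of linearizable equations. Since the zero equation $y''=0$ is already linear, we have $0_2\in\Orb_2$, hence $\Orb_{0_2}=\Orb_2$. Therefore $M$ is either $\Orb_2$ itself or the whole of $J^2\pi$, and to finish I only need to exclude the latter possibility by producing a single point of $J^2\pi$ where $\omega^{(2)}$ does not vanish.

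Producing such a point is a one-line calculation: take the equation $y''=y^3$, i.e.\ $u^0=x^2$ cubed (where $x^2$ denotes the fiber coordinate $y$) and $u^1=u^2=u^3=0$. Then the only nonzero derivatives are $u^0_2=3(x^2)^2$ and $u^0_{22}=6x^2$, so \eqref{F1} gives $F^1=3u^0_{22}=18x^2$, which is nonzero away from $x^2=0$. This yields $M\neq J^2\pi$, completing the proof.

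I do not anticipate any real obstacle: the forward direction is a trivial substitution combined with Theorem \ref{DffInv}, and the reverse direction is a short invariance argument that leverages the fact that $J^2\pi$ has only two orbits. The only step requiring any computation at all is checking $\omega^{(2)}\not\equiv 0$, and any nonlinear example with a nontrivial $y$-dependence in $u^0$ or $u^3$ will work.
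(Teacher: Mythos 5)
Your proof is correct, but for the reverse inclusion it takes a genuinely different route from the paper's. The forward inclusion is the same in both: every term of \eqref{F1} and \eqref{F2} contains a fibre coordinate, so $\omega^{(2)}\bigr|_{0_2}=0$, and the invariance of $\omega^{(2)}$ (theorem \ref{DffInv}) makes its zero set $M$ a union of orbits, whence $\Orb_{0_2}\subset M$. For $M\subseteq\Orb_{0_2}$ the paper does \emph{not} appeal to the orbit classification of \cite{GYum}: it computes $\dim\Orb_{0_2}=\dim W_0/L_0^4-\dim\g_{0_2}=30-6=24$ directly from the isotropy algebra, notes that \eqref{DfM} gives $\dim M=\dim J^2\pi-2=24$ as well, and concludes from the connectedness of $M$ and $\Orb_{0_2}$ that they coincide. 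You instead import from \cite{GYum} the fact that $J^2\pi$ carries exactly two orbits, observe that $M$ is then either one orbit or all of $J^2\pi$, and kill the second alternative by exhibiting a point with $F^1\neq0$ (your example $u^0=(x^2)^3$ works away from $x^2=0$; even simpler, the point of $J^2\pi$ with $u^0_{22}=1$ and all other fibre coordinates zero gives $F^1=3$). Each approach buys something: yours is shorter, needs no computation of $\dim\g_{0_2}$, and cleanly avoids the delicate step in the paper's argument where equality of dimensions plus connectedness is taken to force equality of the sets (an equidimensional connected subset need not a priori exhaust a connected ambient set of the same dimension --- it is really the two-orbit structure that closes this gap). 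On the other hand, the paper is explicitly advertising an ``independent proof'' of the linearizability criterion, and its dimension-count argument keeps the lemma self-contained, whereas your argument leans on the orbit decomposition of \cite{GYum}, which is adjacent to the very statement the section is re-deriving; also note that the identification $0_2\in\Orb_2$ via linearity of $y''=0$ (item 3 of \cite{GYum}) is not actually needed in your argument, since $M=\Orb_{0_2}$ follows whichever of the two orbits contains $0_2$.
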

\begin{proof} It is clear that $\dim\Orb_{0_2}
  =\dim W_0/L_0^4 -\dim\g_{0_2}$. We have \linebreak
  $\dim W_0/L_0^4=30$. It
  is easy to calculate that $\dim\g_{0_2}=6$. Therefore
  $\dim\Orb_{0_2}=24$. From \eqref{DfM}, we have that $\dim M
  =\dim J^2\pi-2=24$ too.

  Obviously, $\omega^{(2)}\bigl|_{0_2}=0$. Now from
  theorem \ref{DffInv}, we get that $\Orb_{0_2}\subset M$. At
  last, the sets $M$ and $\Orb_{0_2}$ are connected subsets in
  $J^2\pi$. This concludes the proof.
\end{proof}
\begin{lemma}\label{ZeroStrFrm1} Let $\theta_2\in\Orb_{0_2}$
  and let $\theta_1=\pi_{2,1}(\theta_2)$; then the natural
  projection of the isotropy groups of these points
  $$
    G_{\theta_2}\to G_{\theta_1}\,,\quad
    [f]^4_p\mapsto [f]^3_p\,,
  $$
  is a bijection.
\end{lemma}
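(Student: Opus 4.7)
I would prove the bijection by establishing (a) injectivity of the corresponding Lie-algebra map $\rho_{4,3}\colon\g_{\theta_2}\to\g_{\theta_1}$, (b) equality of dimensions, and (c) a routine transfer to the groups.

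For (a), let $[X]^4_p\in\g_{\theta_2}$ lie in the kernel, so $[X]^3_p=0$ and $X$ is encoded by a symbol $h\in L^3_p/L^4_p\cong V_p\otimes S^4V_p^*$. Looking at \eqref{DfrmtnVlst1}, every term of $\psi(X)$ contains a derivative of $X$ of order at most two; hence $D_\sigma\psi^i(X)$ for $|\sigma|\le 1$ only involves derivatives of $X$ of order at most three, and these expressions vanish identically under $[X]^3_p=0$. Of the characterization \eqref{IstrpAlgk} of $\g_{\theta_2}$, only the conditions $D_{j_1j_2}\psi^i(X)(\theta_2)=0$ remain; and by the same order count the surviving contributions come solely from $D_{j_1j_2}$ applied to the leading symbol of $\psi(X)$, which by comparing \eqref{DfrmtnVlst1} with \eqref{Smbl} is exactly the symbol defining $g$. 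Thus the conditions on $h$ read $h\in g^{(2)}$, the second Spencer prolongation. Since $g^{(1)}=\{0\}$ by \eqref{FstPrlng}, iteration gives $g^{(2)}=\{0\}$, so $h=0$ and the kernel is trivial.

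For (b), the proof of Lemma~\ref{ZeroStrFrm} already computes $\dim\g_{0_2}=6$; isotropy dimension being constant along the orbit $\Orb_{0_2}$ then gives $\dim\g_{\theta_2}=6$. On the other hand, $J^1\pi$ is a single $\Gamma$-orbit (fact~1 of the Introduction), so orbit-stabilizer yields $\dim\g_{\theta_1}=6$ as well. An injective linear map between equal-dimensional spaces is bijective. For (c), the group-theoretic kernel of $G_{\theta_2}\to G_{\theta_1}$ consists of those $[f]^4_p$ with $[f]^3_p=[\id]^3_p$; on such jets the polynomial condition $f^{(2)}(\theta_2)=\theta_2$ becomes linear in the fourth-order symbol of $f$, and coincides with the infinitesimal kernel already shown trivial. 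Equal dimensions then promote Lie-algebra bijectivity to the claimed group bijectivity.

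\textbf{Main obstacle.} The delicate step in (a) is the identification of the remaining $|\sigma|=2$ isotropy conditions with the prolongation condition $h\in g^{(2)}$: this relies on the specific structure of $\psi(X)$ in \eqref{DfrmtnVlst1}, whose leading symbol coincides with the one defining $g$ in \eqref{Smbl}. Everything else amounts to bookkeeping with dimensions.
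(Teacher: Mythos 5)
Your proof is correct and reaches the result by a route that differs from the paper's in its key step. The paper verifies the lemma only at the reference jet $0_2$: it checks directly that $G_{0_2}\to G_{0_1}$ is injective, computes $\dim G_{0_1}=\dim G_{0_2}=6$, and then transports the conclusion to an arbitrary $\theta_2\in\Orb_{0_2}$ using the surjectivity of $\pi_{2,1}:\Orb_{0_2}\to J^1\pi$ and the conjugacy of isotropy groups along an orbit. You instead establish injectivity at an \emph{arbitrary} $\theta_2$ by identifying the kernel of $\g_{\theta_2}\to\g_{\theta_1}$ with the second Spencer prolongation $g^{(2)}$, which vanishes because $g^{(1)}=\{0\}$ by \eqref{FstPrlng}; your order count in \eqref{DfrmtnVlst1} and the matching of the leading symbol of $\psi(X)$ with \eqref{Smbl} are exactly right (note only that your $L^3_p/L^4_p\cong V_p\otimes S^4V_p^*$ follows the convention the paper actually uses for $g\subset L^1_p/L^2_p$, not the displayed formula $L^k_p/L^{k+1}_p\cong V_p\otimes S^k(V_p^*)$, which is off by one). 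This buys something the paper's computation does not make visible: injectivity of $G_{\theta_2}\to G_{\theta_1}$ holds at \emph{every} point of $J^2\pi$, and the hypothesis $\theta_2\in\Orb_{0_2}$ is needed only for the dimension count $\dim\g_{\theta_2}=6$; it also ties the lemma conceptually to the same vanishing $g^{(1)}=\{0\}$ that drives Theorem \ref{Obstrctn}. The one place where you are no more rigorous than the paper is the final step: both arguments pass from ``injective homomorphism between $6$-dimensional groups'' to ``bijective,'' which strictly speaking only shows the image is an open subgroup containing the identity component of $G_{\theta_1}$; a remark on components (or an explicit lifting of a generating set of $G_{0_1}$) would be needed to close this in full, but since the paper's own proof makes the identical move I regard your argument as complete at the paper's level of detail.
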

\begin{proof}
  It is easy to prove that the natural projection
  $G_{0_2}\to G_{0_1}$ is an injection and that $\dim G_{0_1}
  =\dim G_{0_2}=6$. Therefore the natural projection
  $G_{0_2}\to G_{0_1}$ is a bijection. The projection
  $\pi_{2,1}:\Orb_{0_2}\to J^1\pi$ is a surjective. This implies
  the proof.
\end{proof}
For any section $S$ of $\pi$, by $\omega^{(2)}_S$ we denote the
form $\bigl(j_2S\bigr)^*(\omega^{(2)})$.
\begin{theorem}\label{Obstrctn} The section $S$ can be transformed (locally) to
  $\0$ by a point transformation iff $\omega^{(2)}_S\equiv 0$.
\end{theorem}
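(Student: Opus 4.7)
Necessity of $\omega^{(2)}_S\equiv 0$ is routine given the invariance already in hand. If $f(S)=\0$ near $p_0$, then from Proposition~\ref{RdcJrms} and the lifting formula \eqref{LftTr} one has $j_2S(p)=(f^{(2)})^{-1}\bigl(j_2\0(f(p))\bigr)$, so the $\Gamma$-invariance established in Theorem~\ref{DffInv} gives $\omega^{(2)}_S=f^*\omega^{(2)}_{\0}$. It therefore suffices to note that $\omega^{(2)}_{\0}\equiv 0$, which is immediate from \eqref{F1} and \eqref{F2}: substituting $u^i\equiv 0$ yields $F^1=F^2=0$.

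For sufficiency, assume $\omega^{(2)}_S\equiv 0$ on a neighborhood $U$ of $p_0$. Lemma~\ref{ZeroStrFrm} places $j_2S(U)\subset M=\Orb_{0_2}$, and in particular $j_2S(p_0)$ and $0_2$ lie in the same $\Gamma$-orbit. Choose $\varphi\in\Gamma$ with $\varphi(p_0)=0$ and $\varphi^{(2)}(j_2S(p_0))=0_2$; after replacing $S$ by $\varphi(S)$ we may assume $p_0=0$ and $j_2S(0)=0_2$. This uses only the orbit description of $M$ and the fact that $\Gamma^{(2)}$ acts transitively on $\Orb_{0_2}$.

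The real work is promoting this pointwise matching to an equivalence of germs. I propose to read the requirement $f(S)=\0$ as the overdetermined system of PDEs on $g=f^{-1}$ obtained by setting each coefficient $\tilde u^\alpha$ of \eqref{CffTr} equal to zero---four second-order equations for the two components of $g$. Its symbol at each point is exactly the two-dimensional subspace $g\subset L^1_p/L^2_p$ cut out by \eqref{Smbl}, and by \eqref{FstPrlng} its first prolongation $g^{(1)}$ vanishes, so the system is of finite type. A direct prolongation computation (of the same nature as the one that produced \eqref{F1}--\eqref{F2}) identifies the single first-order compatibility condition as $F^1=F^2=0$, i.e.\ $\omega^{(2)}_S=0$; Lemma~\ref{ZeroStrFrm1}---which says that for $\theta_2\in\Orb_{0_2}$ the 4-jet of an isotropic transformation is already determined by its 3-jet---ensures no further obstructions appear at orders $\geq 3$. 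The standard integration theorem for finite-type involutive systems then produces a smooth local solution $g$, whose inverse $f=g^{-1}$ is the desired point transformation.

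The main obstacle is this final integration step. Everything up to it amounts to a direct application of previously established facts---the invariance of $\omega^{(2)}$, the orbit identification $M=\Orb_{0_2}$, and the transitivity of $\Gamma^{(2)}$ on $\Orb_{0_2}$. The genuinely technical piece is to verify carefully that $g^{(1)}=0$, the compatibility condition $\omega^{(2)}_S=0$, and the rigidity statement of Lemma~\ref{ZeroStrFrm1} together constitute a \emph{complete} list of integrability conditions, after which the existence of $f$ follows from the standard Frobenius/Cartan--K\"ahler-type theorem for overdetermined systems of finite type.
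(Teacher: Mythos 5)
Your outline coincides with the paper's in its skeleton: both reduce sufficiency to solvability of the finite-type system $\E(\0,S)$ given by $\0=f^{(0)}\circ S\circ f^{-1}$, whose symbol is $g$ and whose first prolongation is killed by \eqref{FstPrlng}, so that a local solution exists (by a Kuranishi-type theorem) once one knows the natural projection $\E^{(2)}(\0,S)\to\E^{(1)}(\0,S)$ is surjective. But that surjectivity is precisely the point you leave open: you defer it to ``a direct prolongation computation'' that would identify $F^1=F^2=0$ as the complete list of compatibility conditions, and you acknowledge that this verification is the ``genuinely technical piece.'' As written the argument is therefore incomplete at its central step; moreover the computation you propose is not how the hypothesis $\omega^{(2)}_S\equiv 0$ actually enters the paper's proof, and it is far from clear that it would close the order-$\geq 3$ issues merely by citing Lemma \ref{ZeroStrFrm1} in passing.

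The idea you are missing is that no prolongation computation is needed: surjectivity is read off from the orbit structure. Given $[f]^3_p\in\E^{(1)}(\0,S)$, the hypothesis gives $\omega^{(2)}([S]^2_p)=0$, so by Lemma \ref{ZeroStrFrm} both $[S]^2_p$ and $[\0]^2_{f(p)}$ lie in $\Orb_{0_2}$; hence some $4$-jet $[f']^4_p$ carries $[S]^2_p$ to $[\0]^2_{f(p)}$, i.e.\ $[f']^4_p\in\E^{(2)}(\0,S)$. Its $3$-jet and the given $[f]^3_p$ both carry $[S]^1_p$ to $[\0]^1_{f(p)}$, so they differ by an element of the isotropy group $G_{[S]^1_p}$, and Lemma \ref{ZeroStrFrm1} (bijectivity of $G_{\theta_2}\to G_{\theta_1}$ over $\Orb_{0_2}$) lets you lift that correction to $4$-jets, producing an element of $\E^{(2)}(\0,S)$ that projects onto $[f]^3_p$. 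This is the whole content of the sufficiency proof. Note also that your normalization $j_2S(p_0)=0_2$ via transitivity on $\Orb_{0_2}$ plays no role afterwards and can be dropped; what is needed is Lemma \ref{ZeroStrFrm} applied at \emph{every} point $p$, not just at $p_0$. Your necessity argument is fine (the paper dismisses it as obvious).
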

\begin{proof} The necessity is obvious.

  Prove the sufficiency. To this end, we should prove that the
  system of PDEs w.r.t. an unknown point transformation $f$
  $$
    \0=f^{(0)}\circ S\circ f^{-1}
  $$
  has a solution. By $\E(\0,S)$ we denote this system. It easy to
  prove that the symbol of this PDE system at any point is the
  same as the subalgebra $g$ defined above by \eqref{Smbl}. From
  \eqref{FstPrlng}, we obtain that the first prolongation
  $\E^{(1)}(\0,S)$ of $\E(\0,S)$ has the zero
  symbol at every point. Therefore $\E^{(1)}(\0,S)$ has
  a solution if the natural projection $\E^{(2)}(\0,S)\to
  \E^{(1)}(\0,S)\,,\;[f]_p^4\mapsto [f]_p^3$, is a surjection
  (see \cite{Krnsh}).

  Let us check that this projection is a surjection.
  Let $[f]^3_p\in\E^{(1)}(\0,S)$. It takes $[S]^1_p$ to
  $[\0]^1_{f(p)}$. By assumption, $\omega^{(2)}([S]^2_p)=0$. It
  follows from lemma
  \ref{ZeroStrFrm} that $[S]^2_p\in\Orb_{0_2}$. Obviously,
  $[\0]^2_{f(p)}\in\Orb_{0_2}$ too. Hence there exist a
  point transformation $f'$ such that its jet $[f']^4_p$ takes
  $[S]^2_p$ to $[\0]^2_{f(p)}$. This means that
  $[f']^4_p\in\E^{(2)}(\0,S)\,,\;\;[f']^3_p\in\E^{(1)}(\0,S)$, and
  $[f']^3_p$ takes $[S]^1_p$ to $[\0]^1_{f(p)}$. From
  the last, we obtain that there exist
  $g\in G_{[S]^1_p}$ with $[f']^3_p\cdot g=[f]^3_p$. From lemma
  \ref{ZeroStrFrm1}, we get that there
  exist $g'\in G_{[S]^2_p}$ with $\rho_{4,3}(g')=g$. Obviously,
  $[f']^4_p\cdot g'\in\E^{(2)}(\0,S)$ and it is clear that
  $[f]^3_p$ is the image of $[f']^4_p\cdot g'$ under the natural
  projection $\E^{(2)}(\0,S)\to\E^{(1)}(\0,S)$. Thus, this natural
  projection is a surjection.
\end{proof}
\begin{corollary} The form $\omega^{(2)}$ is a unique obstruction
  to the linearizability of ODEs \eqref{eq} by point
  transformations.
\end{corollary}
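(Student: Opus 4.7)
The plan is to deduce this corollary directly from Theorem \ref{Obstrctn} together with the invariance Theorem \ref{DffInv}. Unpacked, the claim is the biconditional that a section $S$ of $\pi$ encodes a linearizable equation \eqref{eq} if and only if $\omega^{(2)}_S \equiv 0$, and I would split it into the two implications.

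For sufficiency, suppose $\omega^{(2)}_S \equiv 0$. Theorem \ref{Obstrctn} then produces a local point transformation $f$ with $f(\,S\,)=\0$; but the zero section corresponds to the equation $y''=0$, which is itself linear, so $S$ is point-equivalent to a linear equation and hence linearizable.

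For necessity, suppose $S$ is point-equivalent via some $f\in\Gamma$ to a section $S'$ coming from a linear equation $y''+a(x)y'+b(x)y=c(x)$, i.e.\ with coefficients $u^0 = c(x^1) - b(x^1)\,x^2$, $u^1 = -a(x^1)$, $u^2 = u^3 \equiv 0$. By Theorem \ref{DffInv} the form $\omega^{(2)}$ is $\Gamma^{(2)}$--invariant, so $\omega^{(2)}_S = f^*(\omega^{(2)}_{S'})$, and it suffices to verify $\omega^{(2)}_{S'}\equiv 0$. Substituting the above data into the explicit formulas \eqref{F1}--\eqref{F2}, every product term dies because $u^2 = u^3 = 0$, and the surviving pure second derivatives $u^0_{22}$, $u^1_{12}$, $u^1_{22}$ all vanish since $u^0$ is affine in $x^2$ and $u^1$ is independent of $x^2$. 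Hence $F^1 = F^2 = 0$ on the 2--jet of $S'$, and so $\omega^{(2)}_{S'}\equiv 0$ by \eqref{Obstrtn}.

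I expect no genuine obstacle: the corollary is essentially formal given the preceding theorems, and the only computational step is the pointwise check on a generic linear equation, which is a short substitution into \eqref{F1} and \eqref{F2}. If one wished to bypass even this verification, one could instead invoke Lie's classical observation that every linear 2nd order ODE is itself point-equivalent to $y''=0$ and then reduce the necessity direction to the sufficiency direction via Theorem \ref{Obstrctn}; but the direct computation is the more economical route.
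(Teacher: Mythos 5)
Your proof is correct, but your main route for the necessity direction is genuinely different from the paper's. The paper's entire proof is one line: it invokes the classical fact that any two second-order linear ODEs are locally point-equivalent, so that ``linearizable'' means ``reducible to $y''=0$'', i.e.\ to the zero section $\0$, and the corollary then collapses onto Theorem \ref{Obstrctn} in both directions --- precisely the alternative you sketch in your final sentence. Your primary argument instead proves necessity by direct verification: you use the $\Gamma^{(2)}$--invariance from Theorem \ref{DffInv} to reduce to a section $S'$ of an already-linear equation, and then substitute $u^2=u^3\equiv 0$, $u^0$ affine in $x^2$, $u^1$ independent of $x^2$ into \eqref{F1}--\eqref{F2}. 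The substitution does check out: once $u^2,u^3$ and all their derivatives are killed, only $3u^0_{22}-2u^1_{12}+2u^1u^1_2$ survives in $F^1$ and only $u^1_{22}$ in $F^2$, and these vanish for the stated coefficient profile. What your route buys is self-containedness --- it needs only the explicit coordinate formulas already derived in the paper, not the external classical equivalence of all linear equations; what the paper's route buys is brevity and the avoidance of any computation, at the price of importing that classical result. Both arguments are sound, and your sufficiency direction coincides with the paper's.
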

\begin{proof} It is well known that any two 2--order linear ODEs
  are (locally) equivalent w.r.t. point transformations. This
  implies the proof.
\end{proof}

%

%
\end{document}